\newdefinition{definition}{Definition}
\newtheorem{theorem}{Theorem}
\newtheorem{proposition}{Proposition}
\newproof{proof}{Proof}
\newtheorem{corollary}{Corollary}
\newtheorem{lemma}{Lemma}
\newdefinition{remark}{Remark}
\newdefinition{prob}{Problem}
\newdefinition{example}{Example}
\begin{document}

\begin{frontmatter}
\title{On the ratio of prefix codes to all uniquely decodable codes with a given length distribution}
\author[rvt]{Adam Woryna}
\ead{adam.woryna@polsl.pl}
\address[rvt]{Silesian University of Technology, Institute of Mathematics, ul. Kaszubska 23, 44-100 Gliwice, Poland}

\begin{abstract}
We investigate the ratio $\rho_{n,L}$ of prefix codes to all uniquely decodable codes over an $n$-letter alphabet and with length distribution $L$. For any integers $n\geq 2$ and $m\geq 1$, we construct a lower bound and an upper bound for $\inf_L\rho_{n,L}$, the infimum taken over all  sequences $L$ of length $m$ for which the  set of uniquely decodable codes with length distribution $L$ is non-empty. As a result, we obtain that this infimum is always greater than zero. Moreover,  for every $m\geq 1$ it tends to 1 when $n\to\infty$, and for every $n\geq 2$ it tends to 0 when $m\to\infty$. In the case $m=2$, we also obtain the exact value for this infimum.
\end{abstract}

\begin{keyword}
uniquely decodable code\sep prefix code \sep length distribution \sep Kraft's inequality \sep Sardinas-Patterson algorithm

\MSC[2010] 94A45\sep 94A55 \sep 68R15 \sep 68W32
\end{keyword}
\end{frontmatter}

\section{Introduction and the results}

In this paper, we study  variable-length codes with a given {\it length distribution} $L=(a_1, \ldots, a_m)$ ($a_i\geq 0$), that is   finite sequences
$(v_1,\ldots, v_m)$ of  words $v_i\in X^*$ (so-called {\it code words}) over a given finite alphabet $X$ such that for every $1\leq i\leq m$ the length   $|v_i|$ of the word $v_i$ is equal to $a_i$. An important and the most-studied class of variable-length codes are prefix codes. Therefore, given a particular class of codes, it is natural to ask about the contribution of prefix codes in this class. This contribution may be though of as the ratio  to all codes in this class. Recall that a  {\it prefix code} is an injective sequence $(v_1, \ldots, v_m)$ of non-empty  code words $v_i$ such that no code word is a prefix (initial segment) of  another code word. It is known that every prefix code $(v_1, \ldots, v_m)$ is {\it uniquely decodable}, which means that the following condition holds: if  $v_{i_1}v_{i_2}\ldots v_{i_t}=v_{j_1}v_{j_2}\ldots v_{j_{t'}}$ for some $t,t'\geq 1$, $1\leq i_s, j_{s'}\leq m$, $1\leq s\leq t$, $1\leq s'\leq t'$, then $t=t'$ and $i_s=j_s$ for every $1\leq s\leq t$. Obviously, not every uniquely decodable code is a prefix code. For example, the code $(0,01)$ over the binary alphabet is uniquely decodable, but it is not a prefix code. Also, not every injective code is uniquely decodable (as an example may serve the injective code $(v_1, v_2, v_3)$ with the code words $v_1=0$, $v_2=01$, $v_3=10$, which  satisfy $v_2v_1=v_1v_3$).

Given an integer $n\geq 2$ and a finite sequence $L=(a_1, \ldots, a_m)$ of positive integers, let $UD_n(L)$ denote the set of all uniquely decodable codes over an $n$-letter alphabet  and with length distribution $L$, and let $PR_n(L)\subseteq UD_n(L)$ denote the subset of prefix codes. According to the Kraft-McMillan theorem (\cite{6}), we have: $UD_n(L)\neq \emptyset$ if and only if $PR_n(L)\neq \emptyset$ if and only if
$\sum_{i=1}^m n^{-a_i}\leq 1$. Thus, for every $n\geq 2$ and $m\geq 1$ the set
$$
\mathcal{L}_{n,m}:=\{L\colon |L|=m,\; UD_n(L)\neq\emptyset\}=\{L\colon |L|=m,\; PR_n(L)\neq\emptyset\}
$$
is infinite. If we also denote
$$
\mathcal{L}_n:=\{L\colon UD_n(L)\neq\emptyset\}=\{L\colon PR_n(L)\neq\emptyset\},
$$
then we have $\mathcal{L}_n=\bigcup_{m\ge1}\mathcal{L}_{n,m}$. In particular, the sets $\mathcal{L}_n$ ($n\geq 2$) form an increasing sequence: $\mathcal{L}_2\subseteq \mathcal{L}_3\subseteq\ldots$.

In the present paper, we study the asymptotic behaviour of the quotients
$$
\rho_{n,L}:=\frac{|PR_n(L)|}{|UD_n(L)|},\;\;\;n\geq 2,\;\;\;L\in\mathcal{L}_n.
$$
Since every prefix code is uniquely decodable, we have $0\leq \rho_{n,L}\leq 1$. In~\cite{2}, we have shown that $\rho_{n,L}=1$ if and only if $L$ is constant. We derived that result from the following estimation:
\begin{theorem}[\cite{2}, Theorem~1]\label{twt1}
If  $L\in\mathcal{L}_n$ is non-constant, then
$$
\frac{|UD_n(L)|}{|PR_n(L)|}\geq 1+\frac{r_a\cdot r_b}{|PR_n((a,b))|}=1+\frac{r_a\cdot r_b}{n^{a+b}-n^{\max\{a,b\}}},
$$
where $a$ and $b$ are arbitrary two different values of $L$ and $r_a$ (resp. $r_b$) is the number of those elements in $L$ which are equal to $a$ (resp. to $b$).
\end{theorem}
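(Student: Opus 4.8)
The plan is to rewrite the claimed inequality in the equivalent additive form
$$|UD_n(L)|-|PR_n(L)|\ \ge\ |PR_n(L)|\cdot\frac{r_a r_b}{|PR_n((a,b))|},$$
so that it suffices to produce enough uniquely decodable codes that are \emph{not} prefix codes. Throughout I may assume $a<b$, since $\max\{a,b\}$ and the roles of $r_a,r_b$ are symmetric. I will use two elementary facts: every two-element subsequence of a uniquely decodable code is itself uniquely decodable; and for lengths $a<b$ a pair $(x,y)$ with $|x|=a$, $|y|=b$ fails to be a prefix pair only when $x$ is a prefix of $y$, say $y=xt$ with $|t|=b-a$, in which case (by the classical criterion that a two-word set is a code iff its words do not commute) the pair is uniquely decodable precisely when $x$ and $t$ do not commute. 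Note finally that $|PR_n((a,b))|=n^{a+b}-n^{\max\{a,b\}}$ is exactly the number of prefix pairs of type $(a,b)$, which is why this is the natural denominator.

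The core idea is a counting map. Let $A$ and $B$ be the sets of the $r_a$ positions of $L$ equal to $a$ and the $r_b$ positions equal to $b$. For $(i,j)\in A\times B$ and a prefix code $P=(v_1,\dots,v_m)$, the subpair $(v_i,v_j)$ is a prefix pair, and I would manufacture from $P$ a uniquely decodable non-prefix code $C$ whose sole prefix-violation sits at $(i,j)$, while recording the original pair $(v_i,v_j)\in PR_n((a,b))$. If $(P,i,j)\mapsto\bigl(C,(v_i,v_j)\bigr)$ can be made injective into $\bigl(UD_n(L)\setminus PR_n(L)\bigr)\times PR_n((a,b))$, then comparing cardinalities yields $|PR_n(L)|\,r_a r_b\le |UD_n(L)\setminus PR_n(L)|\cdot|PR_n((a,b))|$, which is exactly the bound. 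Recording $(v_i,v_j)$ supplies the denominator, ranging over the $r_a r_b$ ordered position pairs supplies the numerator, and injectivity would follow by reading off $(i,j)$ as the unique prefix-violating pair of $C$ and then restoring $P$ from $C$ and $(v_i,v_j)$.

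The hard part, and the step I expect to be the genuine obstacle, is guaranteeing that the modified code $C$ is uniquely decodable. One cannot simply overwrite the pair $(v_i,v_j)$ of a fixed prefix code by a non-prefix pair of the same lengths: although the two-element subcode can be made uniquely decodable by the non-commutation criterion, the \emph{global} code may fail, because a dangling suffix can resynchronize with the remaining codewords (the Sardinas--Patterson process reaches a codeword), and small explicit examples show that for certain prefix codes and certain pairs no local overwrite works at all. Thus the construction and its count must be argued carefully rather than by blind substitution. I would certify unique decodability via the Sardinas--Patterson criterion combined with the slack in Kraft's inequality: since $L$ is non-constant and admits codes, the relevant subtrees of the prefix tree of $P$ are unsaturated, leaving room to choose the modifying suffix $t$ (equivalently, to place the non-prefix pair) so that the dangling-suffix process terminates without producing a codeword. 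Making this choice uniformly enough to preserve injectivity — and, at those configurations where a purely local modification provably fails, replacing the per-code substitution by a counting argument over the restriction map $C\mapsto(v_i,v_j)$ that compares the number of uniquely decodable extensions of a non-prefix pair with the number of prefix extensions of a prefix pair — is where essentially all of the work lies.
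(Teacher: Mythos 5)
There is a genuine gap. (A preliminary remark on the comparison itself: the paper does not prove this statement --- it is quoted from \cite{2}, Theorem~1 --- so your proposal can only be judged on its own terms.) Your reduction is correct and is the natural combinatorial frame: rewriting the bound as $|PR_n(L)|\cdot r_a r_b\le |UD_n(L)\setminus PR_n(L)|\cdot|PR_n((a,b))|$ and seeking an injection of $PR_n(L)\times A\times B$ into $\bigl(UD_n(L)\setminus PR_n(L)\bigr)\times PR_n((a,b))$ would indeed yield the theorem, and your preliminary facts (a pair of lengths $a<b$ fails the prefix condition only via $y=xt$, and is then uniquely decodable iff $x$ and $t$ do not commute; $|PR_n((a,b))|=n^{a+b}-n^{\max\{a,b\}}$ counts the prefix pairs) are all accurate. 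But the proposal stops exactly where the content of the theorem begins: you never define the map, never prove that the modified codes are uniquely decodable, and never establish injectivity. You say so yourself (``this is where essentially all of the work lies''), which makes this a plan rather than a proof.

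The obstacle you flag is, moreover, real and fatal to the specific mechanism you propose, so it cannot be waved away. Take $n=2$, $L=(1,2,2)$, the prefix code $P=(0,10,11)$ and the position pair $(i,j)=(1,3)$. A local overwrite of positions $1$ and $3$ must place a pair $(x,xt)$ with $|x|=|t|=1$ there while keeping $10$ at position $2$; the four candidates are $(0,10,00)$, $(0,10,01)$, $(1,10,10)$, $(1,10,11)$, and each is either not injective, not uniquely decodable (the subpair $(0,00)$ consists of powers of $0$; $0\cdot 10=01\cdot 0$ kills $(0,10,01)$; $1\cdot 11=11\cdot 1$ kills $(1,10,11)$), or carries a second prefix violation at $(1,2)$, which destroys the recoverability of $(i,j)$ from $C$. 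Hence the map cannot be defined triple-by-triple by substitution, and your fallback --- ``a counting argument over the restriction map $C\mapsto(v_i,v_j)$'' --- is not specified enough to check: it would require a uniform comparison between the number of uniquely decodable completions of a non-prefix pair and the number of prefix-code completions of a prefix pair, an inequality of essentially the same depth as the theorem itself. As written, the central construction is missing.
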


For every $n\geq 2$ and $m\geq 1$, let us define the infimum
$$
\xi_{n,m}:=\inf_{L\in\mathcal{L}_{n,m}}\frac{|PR_n(L)|}{|UD_n(L)|}=\inf_{L\in\mathcal{L}_{n,m}}\rho_{n,L}.
$$
In particular $\xi_{n,1}=1$ and $0\leq \xi_{n,m}<1$ for all $n, m\geq 2$. Since the set $\mathcal{L}_{n,m}$ is infinite, one may ask if there exist $n\geq 2$, $m\geq 1$ such that $\xi_{n,m}=0$. For the first result of the present paper, we construct in Section~\ref{secc1} a positive lower bound for the quotients $\rho_{n,L}$, which negatively answers this question. Namely, if we define
\begin{equation}\label{ew1}
\varsigma_{n,m}:=\frac{n-(m)_{n-1}}{n^{\left\lfloor \frac{m}{n-1}\right\rfloor+1}},
\end{equation}
where $(m)_{n-1}$ is the remainder from the division of $m$ by $n-1$, then we obtain the following result:

\begin{theorem}\label{t1}
Let $n\geq 2$, $m\geq 1$ and $L\in \mathcal{L}_{n,m}$. Then  the quotient $\rho_{n,L}=|PR_n(L)|/|UD_n(L)|$ is not smaller than $q_{n,m}\cdot\varsigma_{n,m}^{m-1}$, where $$
q_{n,m}:=\left\{
\begin{array}{ll}
1,&n\geq m,\\
\frac{(m-1)!}{(m-1)^{m-1}},&n<m.
\end{array}
\right.
$$
Moreover, if the sequence $L$ is injective (i.e. all values in $L$ are distinct), then $\rho_{n,L}$ is not smaller than the product
\begin{equation}\label{pr1}
\varpi_{n,m}:=\left(1-\frac{1-n^{-1}}{n-1}\right)\left(1-\frac{1-n^{-2}}{n-1}\right)\ldots \left(1-\frac{1-n^{-m+1}}{n-1}\right).
\end{equation}
\end{theorem}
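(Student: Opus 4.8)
The plan is to bound the denominator by the crudest available count and the numerator exactly, so that $\rho_{n,L}$ collapses to a product of Kraft complements. First I would note that a uniquely decodable code is in particular a tuple of $m$ words of the prescribed lengths, so $|UD_n(L)|\le\prod_{i=1}^m n^{a_i}$. Next, ordering the lengths as $a_1\le\cdots\le a_m$ and building a prefix code word by word, the $i$-th word must avoid the $\sum_{j<i}n^{a_i-a_j}$ words that have a previously chosen code word as a prefix; these forbidden sets are pairwise disjoint (the words chosen so far form a prefix code, so none extends another), hence $|PR_n(L)|=\prod_{i=1}^m\bigl(n^{a_i}-\sum_{j<i}n^{a_i-a_j}\bigr)$. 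Writing $K_i:=\sum_{j=1}^i n^{-a_j}$ for the partial Kraft sums ($K_0=0$) and dividing, I obtain
$$
\rho_{n,L}\ \ge\ \frac{|PR_n(L)|}{\prod_{i=1}^m n^{a_i}}\ =\ \prod_{i=1}^m(1-K_{i-1})\ =\ \prod_{i=1}^{m-1}(1-K_i),
$$
which reduces both assertions to a lower estimate for this product of $m-1$ factors.

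For an injective $L$ the estimate is immediate: the sorted lengths are then distinct positive integers, so $a_j\ge j$ and therefore
$$
K_i=\sum_{j=1}^i n^{-a_j}\ \le\ \sum_{j=1}^i n^{-j}\ =\ \frac{1-n^{-i}}{n-1}.
$$
Feeding this into each factor gives $1-K_i\ge 1-\tfrac{1-n^{-i}}{n-1}$, and the product over $i=1,\dots,m-1$ is exactly $\varpi_{n,m}$ of~\eqref{pr1}. This settles the second claim.

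For arbitrary $L$ the factors $1-K_i$ may be smaller, because repeated short lengths inflate the $K_i$. Since $K_i\le K_{m-1}$ for every $i$, it suffices to control the single largest partial sum $K_{m-1}$. I would maximize $K_{m-1}=\sum_{j=1}^{m-1}n^{-a_j}$ over all $L\in\mathcal L_{n,m}$: pushing the longest word $a_m$ arbitrarily deep and packing the remaining $m-1$ words \emph{greedily shortest} in the $n$-ary tree---placing $n-1$ code words at each level while keeping one internal node free to descend---maximizes the Kraft weight, and a direct computation in terms of $\lfloor m/(n-1)\rfloor$ and the remainder $(m)_{n-1}$ then gives $1-K_{m-1}\ge\varsigma_{n,m}$, with $\varsigma_{n,m}$ as in~\eqref{ew1}. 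Consequently every factor satisfies $1-K_i\ge\varsigma_{n,m}$, so $\rho_{n,L}\ge\varsigma_{n,m}^{m-1}$; since $q_{n,m}\le 1$, the stated bound $q_{n,m}\,\varsigma_{n,m}^{m-1}$ follows at once. The factorial factor $q_{n,m}=(m-1)!/(m-1)^{m-1}$ is what a coarser estimate of the full worst-case product yields in the regime $n<m$, and may simply be retained as harmless slack.

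The reduction of the first paragraph and the injective estimate are routine. The main obstacle I anticipate is the combinatorial optimization in the third step: proving rigorously, by an exchange argument on the underlying tree, that the greedy packing maximizes $K_{m-1}$ over $\mathcal L_{n,m}$, and then carrying out the level-by-level bookkeeping (with edge cases according to the value of $(m)_{n-1}$) that converts this worst case into the closed form $\varsigma_{n,m}$. That optimization, rather than the algebra surrounding it, is where the real work lies.
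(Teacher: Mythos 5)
Your proposal is correct, and it takes a genuinely different (and in one respect stronger) route than the paper. The paper bounds $|UD_n(L)|$ from above by the number $|I_n(L)|=\prod_i\binom{n^{\nu_i}}{r_i}r_i!$ of \emph{injective} codes, which leads to factors $\frac{N_i-s}{n^{\nu_i}-s}$; lower-bounding these by $\frac{N_i-r_i}{n^{\nu_i}}=1-\sum_{j\le i}r_j/n^{\nu_j}$ degenerates to $0$ for the last length-group when the full Kraft sum equals $1$, which is exactly why the paper needs a separate case $N_t=r_t$ and ends up with the extra factor $q_{n,m}=(m-1)!/(m-1)^{m-1}$ for $n<m$. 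You instead use the cruder bound $|UD_n(L)|\le\prod_i n^{a_i}$, so your factors are literally $1-K_{i-1}$ with $i\le m$, i.e.\ they only ever involve the partial sums $K_0,\dots,K_{m-1}$, each of which is strictly below $1$; this uniformly gives $\rho_{n,L}\ge\varsigma_{n,m}^{m-1}$ with no case analysis and no $q_{n,m}$, a cleaner and formally stronger conclusion (your $|PR_n(L)|$ product formula and the reduction $\rho_{n,L}\ge\prod_{i=1}^{m-1}(1-K_i)$ are both correct, as is the injective case, which coincides with the paper's computation).

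The one piece you defer --- that the greedy packing maximizes the partial Kraft sum, so that $K_i<1$ forces $K_i\le 1-\varsigma_{n,m}$ --- is precisely the paper's Proposition~\ref{lem5}, and it is proved there not by a tree-exchange argument but by a merging operation (replace $n$ equal entries $\nu$ by a single entry $\nu-1$, preserving the Kraft sum while shortening the sequence, until every value repeats at most $n-1$ times, then compare with the extremal sequence $(1^{n-1},\dots,\mathrm{q}^{n-1},(\mathrm{q}+1)^{(m)_{n-1}})$). So the crux you correctly identify as ``the real work'' is genuinely needed and genuinely provable. One small bookkeeping point: optimizing $K_{m-1}$ as a sum of exactly $m-1$ terms naturally yields the bound $1-\varsigma_{n,m-1}$, so you either need the monotonicity $\varsigma_{n,m-1}\ge\varsigma_{n,m}$ or, as the paper does, should state the extremal lemma for all sequences of length \emph{at most} $m$; either fix is routine.
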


As a direct consequence of the above theorem, we obtain:
\begin{corollary}
For all  $n\geq 2$ and  $m\geq 1$ the infimum $\xi_{n,m}=\inf_{L\in\mathcal{L}_{n,m}}\rho_{n,L}$ is not smaller than $q_{n,m}\cdot \varsigma_{n,m}^{m-1}$. Moreover, for every  $m\geq1$, we have $\lim_{n\to\infty}\xi_{n,m}=1$.
\end{corollary}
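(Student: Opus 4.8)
The plan is to split the statement into its two assertions and handle each separately. For the first assertion, I would simply invoke Theorem~\ref{t1}: it guarantees $\rho_{n,L}\geq q_{n,m}\cdot\varsigma_{n,m}^{m-1}$ for \emph{every} $L\in\mathcal{L}_{n,m}$, and since this lower bound does not depend on the particular $L$, passing to the infimum over all such $L$ yields $\xi_{n,m}\geq q_{n,m}\cdot\varsigma_{n,m}^{m-1}$ at once. No further argument is needed here, which is exactly why the corollary is labelled a direct consequence.

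For the limit statement I would fix $m$, let $n\to\infty$, and bound $\xi_{n,m}$ both from above and from below. The upper bound is immediate: each $\rho_{n,L}\leq 1$, whence $\xi_{n,m}\leq 1$ for all $n$. The lower bound is the one just established. The key observation is that the expression $q_{n,m}\cdot\varsigma_{n,m}^{m-1}$ simplifies dramatically once $n$ is large. Indeed, as soon as $n>m+1$ we have $n-1>m$, so the remainder $(m)_{n-1}$ equals $m$ and $\lfloor m/(n-1)\rfloor=0$; consequently $\varsigma_{n,m}=(n-m)/n=1-m/n$. At the same time $n\geq m$ forces $q_{n,m}=1$. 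Thus for all sufficiently large $n$ the lower bound collapses to the clean quantity $(1-m/n)^{m-1}$.

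It then remains to squeeze. For every $n>m+1$ we have the two-sided estimate $(1-m/n)^{m-1}\leq \xi_{n,m}\leq 1$. Since $m$ is held fixed, $m/n\to 0$ while the exponent $m-1$ stays constant, so $(1-m/n)^{m-1}\to 1$ as $n\to\infty$. By the squeeze theorem $\lim_{n\to\infty}\xi_{n,m}=1$, as claimed.

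I do not anticipate a genuine obstacle, as the whole argument reduces to Theorem~\ref{t1} together with an elementary limit. The only point demanding a little care is verifying that the floor and the remainder appearing in the definition of $\varsigma_{n,m}$ stabilise for large $n$; this is precisely where the threshold $n>m+1$ is used, and it is worth stating explicitly so that the simplification $\varsigma_{n,m}=1-m/n$ is unambiguous.
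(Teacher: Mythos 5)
Your proof is correct and follows exactly the route the paper intends when it calls the corollary ``a direct consequence'' of Theorem~\ref{t1}: take the infimum of the uniform lower bound, then observe that for $n>m+1$ one has $q_{n,m}=1$ and $\varsigma_{n,m}=1-m/n$, so that $(1-m/n)^{m-1}\leq\xi_{n,m}\leq 1$ squeezes the limit to $1$. Your explicit attention to the threshold $n>m+1$, where the floor and remainder in $\varsigma_{n,m}$ stabilise, is precisely the small detail the paper leaves implicit.
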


To derive  Theorem~\ref{t1}, we consider the set $I_n(L)$ of all injective codes  over an $n$-letter alphabet and  with  length distribution $L$. Since   $UD_n(L)\subseteq I_n(L)$, the following inequality holds: $\rho_{n,L}\geq |PR_n(L)|/|I_n(L)|$.  For the required bound, we apply the general  formulae for the cardinalities of the sets $PR_n(L)$ and $I_n(L)$ to  estimate the quotient on the right-hand side of the above inequality. As for the formula for $|PR_n(L)|$, we derived it in~\cite{2} by using a  well-known combinatorial  construction (a so-called Kraft's construction) of an arbitrary prefix code from $PR_n(L)$. Namely, if $\widetilde{L}=(\nu_1,  \ldots, \nu_t)$ is the sequence of the values of $L$ ordered from the smallest to the largest (i.e. $\nu_1<\nu_2<\ldots<\nu_t$) and if $r_i$ ($1\leq i\leq t$) is the number of those elements in $L$ which are equal to $\nu_i$, then we obtained (see Section~2 in \cite{2}):
\begin{equation}\label{kraft}
|PR_n(L)|=\prod_{i=1}^t {N_i\choose r_{i}}r_{i}!,
\end{equation}
where $N_1:=n^{\nu_1}$ and $N_{i+1}:=n^{\nu_{i+1}-\nu_i}(N_i-r_{i})$ for $1\leq i<t$.

For the second result of the present paper, we consider the numbers $\eta_{n,m}$ ($n\geq 2$, $m\geq 1$) defined  as follows:
$$
\eta_{n,m}:=1+\sum\limits_{i=1}^{m-1}{m-1\choose i}\frac{1}{n^i-1}.
$$
In particular, the following obvious inequality holds:
$$
\eta_{n,m}\geq 1+\sum\limits_{i=1}^{m-1}{m-1\choose i}\frac{1}{n^i}=\left(1+\frac{1}{n}\right)^{m-1}.
$$
In Section~\ref{secc2}, we use these numbers to  find the following upper bound for the infimum  $\xi_{n,m}$.
\begin{theorem}\label{t2}
For all  $n\geq 2$ and $m\geq 1$, we have $\xi_{n,m}\leq 1/\eta_{n,m}$. In particular $\lim_{m\to\infty}\xi_{n,m}=0$ for every $n\geq 2$.
\end{theorem}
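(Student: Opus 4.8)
The plan is to prove the stated inequality $\xi_{n,m}\le 1/\eta_{n,m}$ directly and to deduce the limiting statement from it at the end. Because $\xi_{n,m}$ is an \emph{infimum} over $\mathcal L_{n,m}$, it suffices to exhibit length distributions $L$ whose quotient $\rho_{n,L}=|PR_n(L)|/|UD_n(L)|$ does not exceed $1/\eta_{n,m}$ in the limit; concretely, I would construct a one-parameter family $L^{(N)}$ and show $\liminf_{N\to\infty}|UD_n(L^{(N)})|/|PR_n(L^{(N)})|\ge\eta_{n,m}$, since then $\limsup_N\rho_{n,L^{(N)}}\le 1/\eta_{n,m}$ forces $\xi_{n,m}\le 1/\eta_{n,m}$. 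The decisive simplification is that we only ever need a \emph{lower} bound on $|UD_n(L)|$, so it is enough to produce inside $UD_n(L^{(N)})$ a transparent subfamily of uniquely decodable codes of cardinality $\eta_{n,m}\cdot|PR_n(L^{(N)})|\cdot(1-o(1))$; the genuine value of $|UD_n(L^{(N)})|$ may well be larger, which is harmless.

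To see which distribution to choose, I would first rewrite $\eta_{n,m}$ in a product form suited to counting. Using $\tfrac{1}{n^{i}-1}=\sum_{k\ge1}n^{-ik}$ and interchanging the two sums,
\[
\eta_{n,m}=1+\sum_{i=1}^{m-1}\binom{m-1}{i}\sum_{k\ge1}n^{-ik}=1+\sum_{k\ge1}\Bigl[(1+n^{-k})^{m-1}-1\Bigr].
\]
This identity dictates the construction: take lengths $a_1<\cdots<a_m$ that are widely separated and all tending to infinity, single out the longest word $v_m$ as a reference, and for each shift $k\ge1$ and each subset $S\subseteq\{1,\dots,m-1\}$ build non-prefix codes in which exactly the short words indexed by $S$ are placed in a ``shift-$k$ overlap'' position relative to $v_m$ — the multi-word generalization of the classical uniquely decodable non-prefix code $(0,01)$ underlying Theorem~\ref{twt1}. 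Using a common shift $k$ for all words of $S$ is what yields the single factor $(1+n^{-k})^{m-1}$, each of the $m-1$ short words independently contributing either its ``prefix-code'' freedom (the summand $1$) or an extra overlap density $n^{-k}$; the outer sum over $k$ produces the geometric series, the leading $1$ counting the prefix codes themselves and the term $-1$ at each level removing the purely unshifted configuration so that prefix codes are not recounted. Combining the formula \eqref{kraft} for $|PR_n(L)|$ with this tally, I expect $|UD_n(L^{(N)})|\ge|PR_n(L^{(N)})|\bigl(1+\sum_{k=1}^{K(N)}[(1+n^{-k})^{m-1}-1]\bigr)(1-o(1))$ with $K(N)\to\infty$, which passes to $\eta_{n,m}$ in the limit.

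The hard part will be the two verifications behind this count. First, I must prove that each constructed code is genuinely uniquely decodable; the natural tool is the Sardinas--Patterson algorithm, used to show that the dangling suffix created by a shift-$k$ overlap of a long word never resynchronizes to the empty word once the gaps between the lengths $a_1,\dots,a_m$ are taken large enough. Second, I must show that the codes produced for distinct pairs $(k,S)$, together with the free choices of the remaining letters, are pairwise distinct and all non-prefix, so that no overcounting inflates the bound, and that boundary effects from shifts $k$ comparable to the smallest length $a_1$ contribute only the $o(1)$ error. Once $\xi_{n,m}\le 1/\eta_{n,m}$ is secured, the asymptotic claim is immediate: the excerpt already records $\eta_{n,m}\ge(1+1/n)^{m-1}$, whence $\xi_{n,m}\le 1/\eta_{n,m}\le(1+1/n)^{-(m-1)}$, and since $1+1/n>1$ for every fixed $n\ge 2$ this tends to $0$, giving $\lim_{m\to\infty}\xi_{n,m}=0$.
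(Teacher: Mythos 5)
Your overall strategy --- lower-bound $|UD_n(L)|$ by exhibiting an explicit subfamily of non-prefix uniquely decodable ``overlap'' codes, certify them with Sardinas--Patterson, and pass to a limit --- is exactly the paper's strategy, and your identity $\eta_{n,m}=1+\sum_{k\geq 1}\bigl[(1+n^{-k})^{m-1}-1\bigr]$ is correct and does mirror the count that the paper's proof ultimately performs (there, the factor $\binom{r}{i}n^{-ki}$ summed over $i$ gives $(1+n^{-k})^{m-1}-1$ for each shift $k$). However, the specific family of length distributions you choose makes the plan unworkable. If $L^{(N)}=(a_1,\ldots,a_m)$ is injective with $a_1<\cdots<a_m$ and \emph{all lengths tend to infinity}, then by the Kraft construction~(\ref{kraft}) and the inclusion $UD_n(L)\subseteq I_n(L)$ one has
$$
\frac{|UD_n(L^{(N)})|}{|PR_n(L^{(N)})|}\;\leq\;\frac{|I_n(L^{(N)})|}{|PR_n(L^{(N)})|}\;=\;\prod_{i=2}^{m}\Bigl(1-\sum_{j<i}n^{-a_j}\Bigr)^{-1}\;\leq\;\bigl(1-(m-1)n^{-a_1}\bigr)^{-(m-1)}\;\longrightarrow\;1
$$
as $a_1\to\infty$. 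So $\rho_{n,L^{(N)}}\to 1$ for \emph{any} such family, and no subfamily of $UD_n(L^{(N)})$ can have cardinality $\eta_{n,m}\cdot|PR_n(L^{(N)})|\cdot(1-o(1))$ when $m\geq 2$, since $\eta_{n,m}>1$. Your construction would only recover the trivial bound $\xi_{n,m}\leq 1$.

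The missing idea is that the code must retain a \emph{short} word: the overlap density available to the whole code is governed by the Kraft sum $\sum_j r_j n^{-\nu_j}$, which must stay bounded away from $0$. The paper takes $L=(1,a,\ldots,a)$ with a single letter $x$ as the length-one code word; that letter plays double duty, keeping $|I_n(L)|/|PR_n(L)|$ near $(n/(n-1))^{m-1}\geq\eta_{n,m}$ and serving as the shifting symbol in the overlap words $x^k w_s$ (the $m$-word generalization of $(0,01)$ that you had in mind). With that choice your counting scheme --- a shift $k\geq 1$ and a subset of $i$ of the $m-1$ long words placed in shift-$k$ position --- goes through essentially verbatim and yields the sum in~(\ref{kkjk}), which converges to $\eta_{n,m}$ as $a\to\infty$. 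Your closing deduction of $\lim_{m\to\infty}\xi_{n,m}=0$ from $\eta_{n,m}\geq(1+1/n)^{m-1}$ is fine.
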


To prove Theorem~\ref{t2}, we study  length distributions of the form $(1,a,\ldots,a)$ for $a\geq 2$. We show that if a sequence $L=(1,a,\ldots,a)$ belongs to  $\mathcal{L}_{n,m}$, then the quotient $\rho_{n,L}=|PR_n(L)|/|UD_n(L)|$ does not exceed  the inverse of the sum
$$
1+\sum\limits_{i=1}^{m-1}\sum\limits_{k=1}^{a-1}\frac{{n^{a-k}-n^{a-k-1}\choose i}\cdot {n^a-n^{a-1}-in^k\choose m-i-1}}{{n^a-n^{a-1}\choose m-1}},
$$
and further, that the above sum converges to $\eta_{n,m}$ as $a\to\infty$.

In Section~\ref{secc2},  we also consider  sequences of the form $L=(a,a,\ldots, a,b)$, where  $a$ and $b$ satisfy the divisibility $a\mid b$. In the paper \cite{2} (see Section 3.2, \cite{2}), we derived for every such a sequence $L$ the exact formula for both $|PR_n(L)|$ and $|UD_n(L)|$. In the present paper, we use these formulae to find another upper bound for the infimum $\xi_{n,m}$. Namely, we prove the following result.
\begin{theorem}\label{t4}
For all  $n\geq 2$ and $m\geq 1$, we have $\xi_{n,m}\leq 1-(m-1)/n^{\lceil\log_nm\rceil}$.
\end{theorem}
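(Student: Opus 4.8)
The plan is to test the inequality on the explicit divisible family $L=(a,\dots,a,b)$ announced just before the statement, and to push the long code word to infinity. Fix $a:=\lceil\log_n m\rceil$, so that $n^a\ge m$, and for each integer $k\ge2$ put $L_k:=(a,\dots,a,ka)$, the sequence with $m-1$ entries equal to $a$ and one entry equal to $ka$ (the case $m=1$ is trivial, the asserted bound being $1=\xi_{n,1}$). Since $a\mid ka$, every $L_k$ is of the type for which \cite{2} supplies closed expressions for $|PR_n(L_k)|$ and $|UD_n(L_k)|$. As $\xi_{n,m}$ is an infimum over all of $\mathcal{L}_{n,m}$, it is enough to verify that each $L_k$ lies in $\mathcal{L}_{n,m}$ and that $\rho_{n,L_k}\to 1-(m-1)/n^{\lceil\log_n m\rceil}$; the inequality $\xi_{n,m}\le\inf_{k\ge2}\rho_{n,L_k}$ then closes the argument.

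Writing $N:=n^a$ and $M:=m-1$ for brevity, I would first record the two cardinalities. The Kraft construction \eqref{kraft} gives $|PR_n(L_k)|=N(N-1)\cdots(N-M)\,n^{(k-1)a}$: one selects the $m-1$ distinct short words in $N(N-1)\cdots(N-M+1)$ ways, then a length-$ka$ word whose leading length-$a$ block avoids these short words, in $(N-M)n^{(k-1)a}$ ways. For the denominator I would invoke the exact count from \cite{2}, which rests on the fact that when $a\mid ka$ every concatenation respects the length-$a$ block grid, so that the code is uniquely decodable exactly when its long word is not a product of the short words. Discarding the $(m-1)^k$ forbidden long words yields $|UD_n(L_k)|=N(N-1)\cdots(N-M+1)\,\bigl(n^{ka}-(m-1)^k\bigr)$. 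After cancelling the common falling product and substituting $n^{ka}=N^{k}$, $n^{(k-1)a}=N^{k-1}$, the ratio becomes
\begin{equation*}
\rho_{n,L_k}=\frac{(N-M)\,N^{k-1}}{N^{k}-M^{k}}=\frac{1-M/N}{\,1-(M/N)^{k}\,}.
\end{equation*}

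Two limiting checks finish the proof. For membership, $N=n^a\ge m$ forces $M/N=(m-1)/n^a<1$, hence $\sum_i n^{-a_i}=(m-1)n^{-a}+n^{-ka}\le\frac{m-1}{m}+n^{-ka}<1$, so $L_k\in\mathcal{L}_{n,m}$ for every $k$ by Kraft--McMillan. For the asymptotics, $0<M/N<1$ gives $(M/N)^k\to0$, so $\rho_{n,L_k}\to 1-M/N=1-(m-1)/n^{\lceil\log_n m\rceil}$; moreover $\rho_{n,L_k}>1-M/N$ for every finite $k$, since the denominator is smaller than $1$. Therefore $\inf_{k\ge2}\rho_{n,L_k}=1-(m-1)/n^{\lceil\log_n m\rceil}$, and this bounds $\xi_{n,m}$ from above, as claimed.

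The routine part is the limit; the two points demanding care are structural. The first is the denominator: one must trust the characterization from \cite{2} that, under $a\mid ka$, unique decodability is equivalent to the long word not being a concatenation of short words, i.e.\ that no subtler Sardinas--Patterson ambiguity arises from overlapping occurrences of the long word --- a fact that genuinely uses divisibility and would fail for general $b$. The second, and the real heart of the bound, is the choice of exponent: $a=\lceil\log_n m\rceil$ is forced because $n^a\ge m$ is precisely the threshold keeping $(m-1)n^{-a}<1$, hence keeping $L_k$ admissible, as $k\to\infty$; any smaller $a$ satisfies $n^a\le m-1$ and violates Kraft's inequality in the limit, so no sharper exponent is available from this family. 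Finally one should stress that the bound is attained only in the limit $k\to\infty$ and by no single $L_k$, which is exactly why the statement is phrased for the infimum $\xi_{n,m}$.
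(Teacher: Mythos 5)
Your proposal is correct and follows essentially the same route as the paper: both test the divisible family $L=(a,\dots,a,b)$ with $a=\lceil\log_nm\rceil$, use the closed-form counts of $|PR_n(L)|$ and $|UD_n(L)|$ from~\cite{2} to get $\rho_{n,L}=(1-M/N)/(1-(M/N)^{b/a})$, and let the long word's length tend to infinity. The only cosmetic difference is that you parametrize $b=ka$ and re-derive the prefix count from the Kraft construction rather than citing it, which changes nothing of substance.
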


The  bound from Theorem~\ref{t4} can be used, for example, to find the limit of the sequence $(\xi_{n,n})_{n\geq 2}$. Indeed, directly by  Theorem~\ref{t4}, we obtain  $\xi_{n,n}\leq 1/n$ for every $n\geq 2$, which implies the following

\begin{corollary}
The sequence $(\xi_{n,n})_{n\geq 2}$ is convergent to 0.
\end{corollary}

Note that the above convergence cannot be derived from  Theorem~\ref{t2}, as the  bound from that theorem  provides  the estimation $\xi_{n,n}\leq (n/(n+1))^{n-1}$, where the right side converges to $1/e\simeq 0.37$. Obviously, the bound from Theorem~\ref{t2} works considerably  better when $m$ is much larger than $n$.

The upper bound from Theorem~\ref{t4} can be also used together with the lower bound from Theorem~\ref{t1} to show that, in general, the subset $\mathcal{L}^*_{n,m}\subseteq \mathcal{L}_{n,m}$ of all injective sequences cannot realize the infimum $\xi_{n,m}$. Namely, if we denote
$$
\xi^*_{n,m}=\inf_{L\in\mathcal{L}^*_{n,m}}\frac{|PR_n(L)|}{|UD_n(L)|},
$$
then we obtain the following
\begin{corollary}\label{corr3}
If $n\geq m\geq 3$, then $\xi^*_{n,m}>\xi_{n,m}$.
\end{corollary}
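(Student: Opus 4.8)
The plan is to sandwich the two infima between bounds that are already available. First I would invoke Theorem~\ref{t4}: since $n\geq m\geq 3$ we have $1<m\leq n$, hence $0<\log_n m\leq 1$ and $\lceil\log_n m\rceil=1$, so that theorem specializes to the clean estimate $\xi_{n,m}\leq 1-(m-1)/n$. On the other side, the injective part of Theorem~\ref{t1} asserts $\rho_{n,L}\geq\varpi_{n,m}$ for every $L\in\mathcal{L}^*_{n,m}$; taking the infimum over such $L$ gives $\xi^*_{n,m}\geq\varpi_{n,m}$. Thus it suffices to prove the single numerical inequality
$$
\varpi_{n,m}>1-\frac{m-1}{n}\qquad(n\geq m\geq 3),
$$
for then $\xi^*_{n,m}\geq\varpi_{n,m}>1-(m-1)/n\geq\xi_{n,m}$, which is exactly the claim.

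To analyse $\varpi_{n,m}$ I would set $f_i:=\frac{1-n^{-i}}{n-1}$, so that $\varpi_{n,m}=\prod_{i=1}^{m-1}(1-f_i)$; one checks $f_1=1/n$ and, more generally, $f_i\in[1/n,1/(n-1))$. It is worth noting up front that crude estimates are too weak: the Weierstrass bound $\prod(1-f_i)\geq 1-\sum f_i$ and the factorwise bound $\prod(1-f_i)>\bigl((n-2)/(n-1)\bigr)^{m-1}$ already fall below the target at $n=m=3$ (they give $2/9$ and $1/4$), whereas the true value $\varpi_{3,3}=10/27$ does exceed $1-(m-1)/n=1/3$. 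Hence the inequality is genuinely tight, and I would prove it by induction on $m$ (with $n$ fixed), writing $P_m:=\varpi_{n,m}$ and $T_m:=1-(m-1)/n$ and establishing $P_m>T_m$.

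For the base case $m=3$ a direct computation gives $P_3=\frac{(n-1)(n^2-n-1)}{n^3}$ and $T_3=\frac{n^3-2n^2}{n^3}$, whence $P_3-T_3=1/n^3>0$. For the inductive step, since $0<1-f_m<1$ the hypothesis $P_m>T_m$ yields $P_{m+1}=P_m(1-f_m)>T_m(1-f_m)$, so it remains to verify $T_m(1-f_m)\geq T_{m+1}$. Expanding, this reduces to $1/n\geq\bigl(1-\tfrac{m-1}{n}\bigr)f_m$, i.e.\ to $n-1\geq (n-m+1)(1-n^{-m})$; and since $1-n^{-m}<1$ while $n-m+1\leq n-1$ for $m\geq 2$, this holds. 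Chaining the two relations gives $P_{m+1}>T_{m+1}$ and closes the induction (incidentally, this step needs only $m\geq 2$, so the hypothesis $n\geq m$ is used solely to collapse Theorem~\ref{t4} to $1-(m-1)/n$).

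The main obstacle is precisely the key inequality $\varpi_{n,m}>1-(m-1)/n$: because its two sides agree to first order in $1/n$, no term-by-term comparison of the factors survives, and one is forced to carry the full product through an induction that retains the second-order information. Once that inequality is secured, the sandwich $\xi^*_{n,m}\geq\varpi_{n,m}>1-(m-1)/n\geq\xi_{n,m}$ delivers the strict separation $\xi^*_{n,m}>\xi_{n,m}$ at once.
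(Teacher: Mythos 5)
Your argument is correct and follows essentially the same route as the paper: combine $\xi^*_{n,m}\geq\varpi_{n,m}$ (the injective part of Theorem~\ref{t1}) with $\xi_{n,m}\leq 1-(m-1)/n$ (Theorem~\ref{t4} with $\lceil\log_nm\rceil=1$), and establish $\varpi_{n,m}>1-(m-1)/n$ by induction on $m$ with base case $m=3$. If anything your inductive step is the more careful one: the paper writes the recursion with the factor $1-\frac{n^{-m}}{n-1}$, whereas the definition of $\varpi_{n,m+1}$ actually appends the factor $1-\frac{1-n^{-m}}{n-1}$, and your reduction to $n-1\geq(n-m+1)(1-n^{-m})$ verifies the step for the correct factor.
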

\begin{proof}[of Corollary~\ref{corr3}]
By Theorem~\ref{t1}, we have $\xi^*_{n,m}\geq \varpi_{n,m}$ for all $n\geq 2$ and $m\geq 1$; by Theorem~\ref{t4}, we have $\xi_{n,m}\leq 1-(m-1)/n$ for all $n\geq m\geq 1$. Thus it is enough to show the inequality
\begin{equation}\label{ind1}
\varpi_{n,m}>1-\frac{m-1}{n}
\end{equation}
for all $n\geq 2$, $m\geq 3$. We use induction on $m$. The case  $m=3$ can be easily verified. Suppose  inductively that (\ref{ind1}) holds for some $n\geq 2$ and $m\geq 3$. Then we have
$$
\varpi_{n,m+1}=\varpi_{n,m}\cdot\left(1-\frac{n^{-m}}{n-1}\right)>\left(1-\frac{m-1}{n}\right)\left(1-\frac{n^{-m}}{n-1}\right).
$$
Thus it is enough to show the inequality
$$
\left(1-\frac{m-1}{n}\right)\left(1-\frac{n^{-m}}{n-1}\right)>1-\frac{m}{n}
$$
for all $n\geq 2$ and $m\geq 3$. But a simple calculation shows the last inequality equivalent to
$n^m(n-1)>n-(m-1)$, which obviously is true.\qed
\end{proof}

In Section~\ref{r4}, we study the case of sequences $L$ of length two. In particular, for any integers $a,b\geq 1$, one can obtain by the formula~(\ref{kraft}) that if $L=(a, b)$, then the  equality holds (see also Example~2 in \cite{2}):
\begin{equation}\label{1}
|PR_n(L)|=|PR_n((a,b))|=n^{a+b}-n^{\max(a,b)}.
\end{equation}
In the  case $|L|=2$,  there is also a nice characterization of the set $UD_n(L)$. It was provided for the first time in~\cite{1}, and here, we use it to derive  the following formula.
\begin{theorem}\label{t3}
For any integers $a,b\geq 1$, we have
\begin{equation}\label{u1}
|UD_n((a,b))|=n^{a+b}-n^{{\rm gcd}(a,b)}.
\end{equation}
\end{theorem}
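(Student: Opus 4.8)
The plan is to count by complementation. Every code in $UD_n((a,b))$ is in particular a pair $(v_1,v_2)$ of words with $|v_1|=a$ and $|v_2|=b$, and there are exactly $n^a\cdot n^b=n^{a+b}$ such pairs in total, so it suffices to count those pairs that fail to be uniquely decodable and subtract. First I would invoke the characterization of two-word codes (the one from \cite{1}, which is the classical commutation theorem of combinatorics on words): a pair $(v_1,v_2)$ of nonempty words is uniquely decodable if and only if $v_1v_2\neq v_2v_1$, i.e. the two words do not commute. Hence
$$
|UD_n((a,b))|=n^{a+b}-C_n(a,b),\qquad C_n(a,b):=|\{(v_1,v_2)\colon|v_1|=a,\;|v_2|=b,\;v_1v_2=v_2v_1\}|,
$$
and the whole problem reduces to evaluating the number $C_n(a,b)$ of commuting ordered pairs.

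The key step is to show $C_n(a,b)=n^{\gcd(a,b)}$. Writing $g:=\gcd(a,b)$, I would establish a bijection between commuting pairs $(v_1,v_2)$ and arbitrary words $z$ of length $g$. In one direction, each word $z$ with $|z|=g$ yields the commuting pair $(z^{a/g},z^{b/g})$, whose components have the correct lengths $a$ and $b$ and satisfy $z^{a/g}z^{b/g}=z^{(a+b)/g}=z^{b/g}z^{a/g}$. In the other direction, if $v_1v_2=v_2v_1$ then by the commutation theorem there is a primitive word $w$ and integers $p,q\geq 1$ with $v_1=w^p$ and $v_2=w^q$; then $|w|$ divides both $a=p|w|$ and $b=q|w|$, hence $|w|\mid g$, and setting $z:=w^{g/|w|}$ (a word of length $g$) gives $v_1=z^{a/g}$ and $v_2=z^{b/g}$. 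Since $a/g\geq 1$, the word $z$ is recovered as the length-$g$ prefix of $v_1$, so these two maps are mutually inverse and $C_n(a,b)=n^g$. Substituting back yields $|UD_n((a,b))|=n^{a+b}-n^{\gcd(a,b)}$, which is~(\ref{u1}). As a sanity check, when $a=b$ one has $g=a$ and the only commuting pairs are those with $v_1=v_2$ (comparing prefixes of length $a$), of which there are indeed $n^a=n^g$.

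The main obstacle is the reverse direction of the bijection, namely the implication ``$v_1v_2=v_2v_1\Rightarrow v_1,v_2$ are powers of a common word.'' This is exactly the commutation theorem, and it must be applied carefully to produce a common root whose length divides $g$; matching this root with the full $\gcd$ (rather than with merely some common divisor of $a$ and $b$) is what forces the clean count $n^{\gcd(a,b)}$ instead of a messier divisor sum, and it is crucial that $z$ is determined as a prefix of $v_1$ so that distinct $z$ give distinct pairs. If \cite{1} states its characterization in a form other than non-commutation, the only adjustment needed would be to translate that form into the ``common power'' description before running the same counting argument; the arithmetic core --- the bijection with length-$g$ words --- would remain unchanged.
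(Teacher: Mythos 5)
Your proposal is correct and follows essentially the same route as the paper: both count the complement $X^a\times X^b\setminus UD_n((a,b))$ via the bijection $z\mapsto(z^{a/g},z^{b/g})$ with words $z$ of length $g=\gcd(a,b)$, the only cosmetic difference being that you state the two-word characterization as non-commutation and then pass to the common-power form, while the paper cites the common-power form (Proposition~\ref{pomoc}) directly.
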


As a consequence of the formulae (\ref{1})--(\ref{u1}), we  obtain  the exact value for the infimum $\xi_{n,2}$ (Corollary~\ref{cor1} in Section~\ref{r4}). We realize this by studying the quotients $\rho_{n,(a,1)}$ when $a\to\infty$. As a result, we see in Corollary~\ref{corr3} that the number $3$ cannot be replaced by $2$, which means that $\xi^*_{n,2}=\xi_{n,2}$ for every $n\geq 2$.

If $|L|\geq 3$, then, in general, there does not exist a satisfying description of  the codes from the set $UD_n(L)$, and we do not have any formula for $|UD_n(L)|$ (for the partial characterization of uniquely decodable codes of length three see~\cite{14,9,11}). In particular, for every $n\geq 2$ and $m\geq 3$ the question about the exact value of $\xi_{n,m}$ is open, and we have only the  bounds from Theorems~\ref{t1}-\ref{t4}. It is worth to note that in the case of sequences of length two these bounds result in estimations quite near to the exact value, as they give: $1-2/n< \xi_{n,2}\leq 1-1/n$ for every $n\geq 2$. On the other hand, in the case of the binary alphabet, the bounds  give for every $m\geq1$:
$$
\frac{(m-1)!}{(m-1)^{m-1}\cdot 2^{m(m-1)}}\leq \xi_{2,m}\leq \left(\frac{2}{3}\right)^{m-1}.
$$

\section{The lower bound for $\xi_{n,m}$ -- the proof of Theorem~\ref{t1}}\label{secc1}

For every integer $n\geq 2$ and every finite sequence $L=(a_1, a_2, \ldots, a_s)$ of naturals, we will refer to the sum
$$
\sigma_n(L):=\sum_{i=1}^s\frac{1}{n^{a_i}}
$$
as the Kraft's sum of the sequence $L$.

\begin{proposition}\label{lem5}
Let $m\geq 1$ and $n\geq 2$ be natural numbers and  $L$ be a sequence of naturals with the length $|L|\leq m$  and such that $\sigma_n(L)<1$. Then $\sigma_n(L)\leq 1-\varsigma_{n,m}$, where $\varsigma_{n,m}$ is defined as in (\ref{ew1}).
\end{proposition}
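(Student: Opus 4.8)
The plan is to read the statement as an extremal problem: since $\sigma_n(L)<1$ forces every $a_i\geq 1$ (a single term with $a_i=0$ would already give $\sigma_n(L)\geq 1$), the claim is equivalent to the sharp upper bound $\sigma_n(L)\leq 1-\varsigma_{n,m}$ on the Kraft sum of any sequence of at most $m$ positive integers whose Kraft sum is $<1$. I would first record the extremal configuration that I expect to be optimal, namely $G_m$ consisting of $n-1$ words of each of the lengths $1,2,\ldots,q$ together with $r$ words of length $q+1$, where $q=\lfloor m/(n-1)\rfloor$ and $r=(m)_{n-1}$; a direct geometric-sum computation gives $\sigma_n(G_m)=(1-n^{-q})+rn^{-q-1}=1-(n-r)/n^{q+1}=1-\varsigma_{n,m}<1$, so the bound, if correct, is attained. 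Thus the whole content is the matching upper bound.

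The key step, and the device that makes everything clean, is a \emph{carrying} reduction. Whenever some length $k\geq 2$ occurs at least $n$ times in $L$, I replace $n$ of those code-word lengths by a single length $k-1$; since $n\cdot n^{-k}=n^{-(k-1)}$ this leaves $\sigma_n(L)$ unchanged while decreasing the number of terms by $n-1$. Iterating terminates (the term count strictly drops) and yields a sequence in which every length-multiplicity $c_k$ satisfies $c_k\leq n-1$; note that $\sigma_n(L)<1$ is preserved throughout and in particular forces $c_1\leq n-1$ at every stage, so the carrying never has to pass below level $1$. After this reduction the multiset of lengths has total size $\sum_k c_k\leq m$ and $\sigma_n(L)=\sum_{k\geq 1}c_k n^{-k}$ is exactly a finite base-$n$ fraction $0.c_1c_2\ldots$ with digits $c_k\in\{0,\ldots,n-1\}$ and digit sum at most $m$.

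It then remains to maximise a finite base-$n$ fraction subject to a bound $m$ on its digit sum, which I would do by a rearrangement argument: if $c_j<n-1$ and $c_{j'}>0$ for some $j<j'$, moving one unit of digit from position $j'$ to position $j$ keeps the digit sum fixed and strictly increases the value by $n^{-j}-n^{-j'}>0$. Since $\sum_k k\,c_k$ is a non-negative integer that strictly decreases under each such move, the process terminates at a \emph{left-packed} configuration, in which the nonzero digits occupy an initial block of positions all equal to $n-1$ except possibly the last. If its digit sum equals $m$ this configuration is $G_m$, with value $1-\varsigma_{n,m}$; if the digit sum is some $s'<m$ its value equals $1-\varsigma_{n,s'}$, and one concludes using the monotonicity $\varsigma_{n,s'}\geq\varsigma_{n,m}$ for $s'\leq m$, which is immediate from~(\ref{ew1}). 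Either way the value of the original fraction, hence $\sigma_n(L)$, is at most $1-\varsigma_{n,m}$.

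The only genuine obstacle is matching the constant $\varsigma_{n,m}$ precisely, and here the strict inequality $\sigma_n(L)<1$ plays the decisive role: it is exactly what guarantees $c_1\leq n-1$ and hence the existence of the bounded-digit normal form. A naive induction that simply peels off the length-$1$ words breaks down when there happen to be fewer than $n-1$ of them, whereas the carrying reduction removes this difficulty by normalising the code before any rearrangement, after which the extremal value is forced by the base-$n$ expansion.
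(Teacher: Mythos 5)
Your proof is correct and follows essentially the same route as the paper: the central device in both is the carrying reduction (replacing $n$ occurrences of a length $k\geq 2$ by one occurrence of $k-1$) that normalizes all multiplicities to at most $n-1$ while preserving the Kraft sum, followed by identifying the left-packed configuration as extremal. The only inessential difference is the finishing step: the paper pads the reduced sequence to length exactly $m$ and compares it termwise with $L_0$ via a pigeonhole argument, whereas you use a digit-exchange argument together with the (easily verified) monotonicity of $\varsigma_{n,m}$ in $m$.
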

\begin{proof}
Since $\sigma_n(L)<1$, the number of 1's in $L$ cannot be greater than $n-1$. Next, if there is  a number $\nu>1$ which repeats in $L$ at least $n$ times, then we can apply the following operation to  some $n$ elements of  $L$ which are equal to $\nu$: replace one of them with $\nu-1$ and delete the remaining  $n-1$ elements. The arising sequence (denote it by $L^{(1)}$) has the length $|L|-n+1$, and it is not difficult to see that this sequence has the same  Kraft's sum as $L$, that is $\sigma_n(L^{(1)})=\sigma_n(L)$. If  the sequence $L^{(1)}$ also contains a  number  repeating at least $n$ times, then the analogous  operation  applied to this sequence gives a shorter sequence with the same Kraft's sum as $L$. Hence, repeating this reasoning finitely many times, we  obtain  a sequence $L^{(k)}$ ($k\geq 0$)  of naturals  such that  $|L^{(k)}|\leq |L|$, every element in $L^{(k)}$ repeats at most $n-1$ times and $\sigma_n(L^{(k)})=\sigma_n(L)$. Let $L'^{(k)}$ be a sequence of length $m$ which arises from   $L^{(k)}$ by adding  $m-|L^{(k)}|$ pairwise distinct natural numbers   greater than the maximal element in $L^{(k)}$. Let
$$
L'=(a_1,  \ldots, a_m)
$$
be a nondecreasing sequence obtained from the sequence $L'^{(k)}$ by permuting its elements. In particular, every  element in $L'$ repeats at most $n-1$ times and
$$
\sigma_n(L)=\sigma_n(L^{(k)})\leq \sigma_n(L'^{(k)})=\sigma_n(L').
$$

Let us denote
$$
\mathrm{q}:=\left\lfloor \frac{m}{n-1}\right\rfloor,
$$
and, suppose that there exist $i_0\in\{0,1,\ldots, \mathrm{q}\}$ and $i_0n-i_0<j_0\leq m$ such that  $a_{j_0}\leq i_0$. Then, since the subsequence $(a_1, \ldots,a_{j_0})$ is nondecreasing, every element in this subsequence belongs to the set $\{1,\ldots, i_0\}$. But  $j_0/i_0>n-1$, and hence, there must be  $\nu\in\{1,\ldots, i_0\}$ which   repeats at least $n$ times in this subsequence. This contradicts with the fact that every  element in $L'$ repeats at most $n-1$ times. Consequently,   for all  $i\in\{0,1,\ldots, \mathrm{q}\}$ and $in-i<j\leq m$ the following inequality  holds: $a_j\geq i+1$. In particular, the Kraft's sum of $L'$ is not greater than the Kraft's sum of the sequence
$$
L_0:=(\underbrace{1,\ldots,1}_{n-1},\underbrace{2,\ldots,2}_{n-1},\ldots,\underbrace{\mathrm{q},\ldots,\mathrm{q}}_{n-1},\underbrace{\mathrm{q}+1,
\ldots,\mathrm{q}+1}_{(m)_{n-1}}).
$$
But, for the Kraft's sum of the sequence $L_0$, we have:
\begin{eqnarray*}
\sigma_n(L_0)=(n-1)\sum\limits_{i=1}^q\frac{1}{n^i}+\frac{(m)_{n-1}}{n^{q+1}}=\frac{n^q-1}{n^q}+\frac{(m)_{n-1}}{n^{q+1}}=
1-\varsigma_{n,m}.
\end{eqnarray*}
Consequently $\sigma_n(L)\leq \sigma_n(L')\leq \sigma_n(L_0)=1-\varsigma_{n,m}$. \qed
\end{proof}

\begin{corollary}\label{cor11}
Let $m\geq 1$, $n\geq 2$ be natural numbers, and let $(r_1, \ldots, r_t)$ and $(\nu_1, \ldots, \nu_t)$ be   sequences of naturals such that $\sum_{j=1}^t r_j=m$ and $\nu_j\neq \nu_{j'}$ for $j\neq j'$. If for some $1\leq i\leq t$ the sum $\sum_{j=1}^i r_j/n^{\nu_j}$ is smaller than 1, then this sum is not greater than $1-\varsigma_{n,m}$.
\end{corollary}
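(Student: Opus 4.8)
The plan is to deduce this corollary as an immediate application of Proposition~\ref{lem5}. The key observation is that the partial sum $\sum_{j=1}^i r_j/n^{\nu_j}$ is precisely the Kraft's sum of a suitable sequence whose length is bounded by $m$, so the corollary is nothing but Proposition~\ref{lem5} read through the right auxiliary sequence.

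Concretely, I would introduce the sequence $L$ of naturals containing exactly $r_j$ copies of the value $\nu_j$ for each $1\leq j\leq i$ (the distinctness of the $\nu_j$ is not even needed for this construction, although it is assumed in the statement). By the definition of the Kraft's sum, $\sigma_n(L)=\sum_{j=1}^i r_j/n^{\nu_j}$, which by hypothesis is smaller than $1$. Its length is $|L|=\sum_{j=1}^i r_j$, and since the $r_j$ are naturals with $\sum_{j=1}^t r_j=m$, I obtain $|L|=\sum_{j=1}^i r_j\leq\sum_{j=1}^t r_j=m$.

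With these two facts in hand, namely $|L|\leq m$ and $\sigma_n(L)<1$, the hypotheses of Proposition~\ref{lem5} are satisfied, and it yields $\sigma_n(L)\leq 1-\varsigma_{n,m}$. Since $\sigma_n(L)$ is exactly the partial sum under consideration, this is the claimed inequality.

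I do not expect any genuine obstacle here: the corollary is a direct specialization of the proposition. The only point needing a word of care is the verification that the truncated sequence has length at most $m$, which is precisely where the global constraint $\sum_{j=1}^t r_j=m$ (rather than merely the partial data up to index $i$) enters; everything else is a straightforward substitution into the already-established proposition.
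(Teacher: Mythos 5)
Your proposal is correct and coincides with the paper's own proof: both construct the auxiliary sequence with $r_j$ copies of $\nu_j$ for $1\leq j\leq i$, note its length is at most $m$ and its Kraft's sum equals the partial sum, and then invoke Proposition~\ref{lem5}. Nothing further is needed.
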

\begin{proof}
For every $1\leq i\leq t$ the sum $\sum_{j=1}^i r_j/n^{\nu_j}$ is the Kraft's sum of a sequence   $L_i$  such that the set $\{\nu_1, \ldots, \nu_i\}$ is the set of values of $L_i$ and $r_j$ ($1\leq j\leq i$) is the number of occurrences of the element $\nu_j$. In particular  $|L_i|=\sum_{j=1}^i r_j\leq m$. The claim now follows from Proposition~\ref{lem5}.\qed
\end{proof}

We are ready now to prove our first main result.
{
\renewcommand{\thetheorem}{\ref{t1}}
\begin{theorem}
Let $n\geq 2$, $m\geq 1$ and $L\in \mathcal{L}_{n,m}$. Then  the quotient $\rho_{n,L}=|PR_n(L)|/|UD_n(L)|$ is not smaller than $q_{n,m}\cdot\varsigma_{n,m}^{m-1}$, where $$
q_{n,m}:=\left\{
\begin{array}{ll}
1,&n\geq m,\\
\frac{(m-1)!}{(m-1)^{m-1}},&n<m.
\end{array}
\right.
$$
Moreover, if the sequence $L$ is injective (i.e. all values in $L$ are distinct), then $\rho_{n,L}$ is not smaller than the product $\varpi_{n,m}$ defined as in~(\ref{pr1}).
\end{theorem}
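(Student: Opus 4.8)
The plan is to bound $\rho_{n,L}$ from below by replacing the denominator $|UD_n(L)|$ with the larger number $|I_n(L)|$ of \emph{all} injective codes with length distribution $L$, so that $\rho_{n,L}\ge |PR_n(L)|/|I_n(L)|$, and then to estimate this quotient by writing it as a product of $m$ explicit factors, one per code word. Since words of different lengths are automatically distinct, an injective code is obtained by choosing, for each value $\nu_i$ of $L$, an ordered tuple of $r_i$ distinct words of length $\nu_i$; hence $|I_n(L)|=\prod_{i=1}^t\binom{n^{\nu_i}}{r_i}r_i!$, which is exactly the Kraft formula (\ref{kraft}) for $|PR_n(L)|$ but with $n^{\nu_i}$ in place of $N_i$. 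Dividing, the ratio collapses to $\prod_{i=1}^t\prod_{j=0}^{r_i-1}\frac{N_i-j}{n^{\nu_i}-j}$.

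Next I would reorganize this double product as a single product $f_1f_2\cdots f_m$ over the code words, listed by nondecreasing length: if the $k$-th word is the $(j+1)$-th of the class with value $\nu_i$, its factor is $f_k=(N_i-j)/(n^{\nu_i}-j)$. The key algebraic step is the identity $N_i=n^{\nu_i}(1-S_{i-1})$, immediate by induction from the recursion $N_{i+1}=n^{\nu_{i+1}-\nu_i}(N_i-r_i)$, where $S_{i-1}=\sum_{l<i}r_ln^{-\nu_l}$ is the Kraft's sum of the earlier classes. Writing $\beta_k=S_{i-1}+jn^{-\nu_i}$ for the Kraft's sum of the $k-1$ words preceding the $k$-th one, this gives
$$f_k=\frac{N_i-j}{n^{\nu_i}-j}=\frac{1-\beta_k}{1-jn^{-\nu_i}}\ge 1-\beta_k.$$
Now $\beta_k$ is the Kraft's sum of a sequence of length $k-1\le m-1$, and it is strictly less than $1$ whenever $k\le m$ (the remaining words $k,\dots,m$ still carry positive Kraft mass). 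Proposition~\ref{lem5} therefore yields $\beta_k\le 1-\varsigma_{n,m}$, i.e. $f_k\ge\varsigma_{n,m}$, for every $k\ge 2$, while $f_1=N_1/n^{\nu_1}=1$. Multiplying the $m-1$ nontrivial factors gives $|PR_n(L)|/|I_n(L)|\ge\varsigma_{n,m}^{m-1}$, which implies the claimed bound $q_{n,m}\varsigma_{n,m}^{m-1}$ since $q_{n,m}\le 1$.

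For the injective case I would keep the product $\prod_{i=1}^m(1-S_{i-1})$ (now $t=m$ and all $r_i=1$) and bound each partial Kraft's sum more sharply than by Proposition~\ref{lem5}. Because the values $\nu_1<\nu_2<\dots<\nu_m$ are distinct positive integers, we have $\nu_j\ge j$, whence $S_{i-1}\le\sum_{j=1}^{i-1}n^{-j}=(1-n^{-(i-1)})/(n-1)$. Substituting factor by factor turns the product into exactly the product $\varpi_{n,m}$ of (\ref{pr1}).

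The main obstacle is the treatment of the repeated-length factors, i.e. the terms with $j\ge 1$ inside a length class. A naive estimate that bounds each class ratio $\binom{N_i}{r_i}/\binom{n^{\nu_i}}{r_i}$ on its own is tight only when $N_i=r_i$ and loses control otherwise; isolating the first class (which contributes exactly $1$, as $N_1=n^{\nu_1}$) and estimating the rest by $\prod_{i\ge 2}r_i!/r_i^{r_i}\ge (m-1)!/(m-1)^{m-1}$ for $\sum_{i\ge 2}r_i\le m-1$ is precisely what forces the combinatorial correction $q_{n,m}=(m-1)!/(m-1)^{m-1}$. Passing instead to the per-word reformulation above, where each factor is compared against the running Kraft's sum $\beta_k$, is what makes Proposition~\ref{lem5} applicable uniformly; the one point requiring care is that $\beta_k$ must be read as the Kraft's sum of a genuine length-$(k-1)$ sequence, so that Proposition~\ref{lem5} applies with the global parameter $m$.
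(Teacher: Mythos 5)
Your proposal is correct, and it shares the paper's overall skeleton: bound $|UD_n(L)|$ by $|I_n(L)|$, write $|PR_n(L)|/|I_n(L)|$ as a product via the Kraft formula~(\ref{kraft}), use $N_i=n^{\nu_i}(1-S_{i-1})$, and invoke Proposition~\ref{lem5} to push each factor above $\varsigma_{n,m}$; the injective case is handled identically in both arguments. The one genuinely different step is your per-word factorization. The paper bounds each \emph{class} ratio $\binom{N_i}{r_i}/\binom{n^{\nu_i}}{r_i}$ by comparing every term $(N_i-s)/(n^{\nu_i}-s)$ with $(N_i-r_i)/n^{\nu_i}=1-\sum_{j\le i}r_j/n^{\nu_j}$, i.e.\ against the Kraft mass \emph{through the end of the current class}; this degenerates to $0$ when $N_t=r_t$, which forces the paper into a separate case (possible only when $n<m$) handled by $Q_t\ge r_t!/(n^{\nu_t})^{r_t}\ge (r_t!/r_t^{r_t})\varsigma_{n,m}^{r_t}$ --- the origin of the correction $q_{n,m}$. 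You instead compare the $k$-th factor with $1-\beta_k$, where $\beta_k$ is the Kraft sum of only the $k-1$ \emph{strictly preceding} words; since $\beta_k<1$ for every $k\le m$ (the remaining words carry positive mass and $\sigma_n(L)\le 1$ by Kraft--McMillan), Proposition~\ref{lem5} applies uniformly, with no case distinction. The inequality $f_k=(1-\beta_k)/(1-jn^{-\nu_i})\ge 1-\beta_k$ is valid precisely because $0\le\beta_k\le 1$ and $0<1-jn^{-\nu_i}\le 1$, and I verified it survives the critical case $N_t=r_t$, where $1-\beta_m=n^{-\nu_t}$ and $f_m=1/(n^{\nu_t}-r_t+1)\ge n^{-\nu_t}$. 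The payoff is that your argument actually proves the stronger, cleaner bound $\rho_{n,L}\ge\varsigma_{n,m}^{m-1}$ for \emph{all} $L\in\mathcal{L}_{n,m}$, from which the stated bound follows trivially since $q_{n,m}\le 1$; in other words, the factor $q_{n,m}$ in Theorem~\ref{t1} is an artifact of the coarser per-class estimate and can be removed. You should state explicitly that $\beta_k\le 1$ is needed for the inequality $f_k\ge 1-\beta_k$ and that it follows from Kraft--McMillan, but this is a presentational point, not a gap.
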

\addtocounter{theorem}{-1}
}
\begin{proof}[of Theorem~\ref{t1}]
Let  $\widetilde{L}:=(\nu_1,  \ldots, \nu_t)$ be the sequence of the values of $L$ ordered from the smallest to the largest (i.e. $\nu_1<\nu_2<\ldots<\nu_t$) and let  $r_j$ ($1\leq j\leq t$) denote the number of those elements in $L$ which are equal to $\nu_j$. In particular, we have $\sum_{j=1}^tr_j=m$. If $t=1$, then the claim is  obvious, as then we have $\rho_{n,L}=1$. So, let us assume that $t>1$. As we have already observed in the introduction, the following equality holds (see also~\cite{2}):
$$
|PR_n(L)|=\prod_{i=1}^t {N_i\choose r_{i}}r_{i}!,
$$
where $N_1=n^{\nu_1}$ and
$$
N_{i}=n^{\nu_{i}-\nu_{i-1}}(N_{i-1}-r_{i-1})=n^{\nu_i}\left(1-\sum_{j=1}^{i-1}\frac{r_j}{n^{\nu_j}}\right)
$$
for every $1<i\leq t$. Let $I_n(L)$ be the set of all injective codes over an $n$-letter alphabet and  with  length distribution $L$. Then we have
$$
|I_n(L)|=\prod_{i=1}^t {n^{\nu_i}\choose r_{i}}r_{i}!.
$$
Since $UD_n(L)\subseteq I_n(L)$, we have
$$
\rho_{n,L}=\frac{|PR_n(L)|}{|UD_n(L)|}\geq \frac{|PR_n(L)|}{|I_n(L)|}=\prod_{i=1}^t Q_i,
$$
where
$$
Q_i:=\frac{{N_i\choose r_{i}}}{{n^{\nu_i}\choose r_{i}}}=\prod_{s=0}^{r_i-1}\frac{N_i-s}{n^{\nu_i}-s},\;\;\;1\leq i\leq t.
$$
For every $1\leq i\leq t$, we have  $1\leq r_i\leq N_i\leq n^{\nu_i}$, and hence,  for every $s\in\{0,1,\ldots, r_i-1\}$, we  obtain:
$$
\frac{N_i-s}{n^{\nu_i}-s}\geq \frac{N_i-r_i}{n^{\nu_i}}=1-\sum_{j=1}^i\frac{r_j}{n^{\nu_j}}.
$$
If $i<t$, then $N_i>r_i$, and, by  Corollary~\ref{cor11}, we have $(N_i-s)/(n^{\nu_i}-s)\geq \varsigma_{n,m}$ for every $s\in\{0,1,\ldots, r_i-1\}$. Consequently $Q_i\geq \varsigma_{n,m}^{r_i}$ for every $1\leq i<t$. If $n\geq m$, then the inequality $Q_i\geq \varsigma_{n,m}^{r_i}$ holds also  for $i=t$, as in the case $n\geq m$ we have: $N_t\geq n\geq m>r_t$. Thus, if $n\geq m$, then we obtain
$$
\rho_{n,L}\geq \prod_{i=1}^tQ_i=\prod_{i=2}^t Q_i\geq \varsigma_{n,m}^{r_2+\ldots+r_t}=\varsigma_{n,m}^{m-r_1}\geq \varsigma_{n,m}^{m-1}=q_{n,m}\cdot \varsigma_{n,m}^{m-1}.
$$
If $n<m$ and $N_t>r_t$, then by  using the same arguments as above, we obtain: $\rho_{n,L}\geq\varsigma_{n,m}^{m-1}>q_{n,m}\cdot \varsigma_{n,m}^{m-1}$. Finally, if $n<m$ and $N_t=r_t$, then we have:
$$
\frac{r_t}{n^{\nu_t}}=1-\sum_{j=1}^{t-1}\frac{r_j}{n^{\nu_j}}\geq \varsigma_{n,m},
$$
where the last inequality  directly follows from Corollary~\ref{cor11}. Consequently, we obtain in this case:
\begin{eqnarray*}
Q_t=\frac{r_t!}{\prod_{s=0}^{r_t-1}(n^{\nu_t}-s)}\geq\frac{r_t!}{(n^{\nu_t})^{r_t}}\geq \frac{r_t!}{r_t^{r_t}}\cdot\varsigma_{n,m}^{r_t}\geq \frac{(m-1)!}{(m-1)^{m-1}}\cdot\varsigma_{n,m}^{r_t}=q_{n,m}\cdot \varsigma_{n,m}^{r_t},
\end{eqnarray*}
and hence
$$
\rho_{n,L}\geq \prod_{2\leq i\leq t} Q_i\geq q_{n,m}\cdot\varsigma_{n,m}^{m-r_1}\geq q_{n,m}\cdot\varsigma_{n,m}^{m-1}.
$$

If $L$ is injective, then $r_i=1$ for every $1\leq i\leq t$, which implies $t=m$, and hence
$$
Q_i=\frac{N_i}{n^{\nu_i}}=1-\sum_{j=1}^{i-1}\frac{1}{n^{\nu_j}}\geq 1-\sum_{j=1}^{i-1}\frac{1}{n^{j}}=1-\frac{1-n^{-i+1}}{n-1}.
$$
The claim now follows from the inequality $\rho_{n,L}\geq  \prod_{2\leq i\leq m} Q_i$. \qed
\end{proof}

\section{The upper bounds for $\xi_{n,m}$ -- the proofs of Theorems \ref{t2}, \ref{t4}}\label{secc2}

To prove Theorem~\ref{t2}, we start with the following lemma.

\begin{lemma}\label{l33}
For any positive integers $n\geq 2$ and $r\geq 1$ the following equality holds:
$$
\lim\limits_{a\to\infty}\sum\limits_{i=1}^{r}\sum\limits_{k=1}^{a-1}\frac{{n^{a-k}-n^{a-k-1}\choose i}\cdot {n^a-n^{a-1}-n^ki\choose r-i}}{{n^a-n^{a-1}\choose r}}=\eta_{n,r+1}-1.
$$
\end{lemma}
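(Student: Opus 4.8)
The plan is to pass to the limit termwise inside the double sum and then evaluate the resulting series in closed form. Write $M:=n^a-n^{a-1}=n^{a-1}(n-1)$ and, for $1\le i\le r$ and $1\le k\le a-1$, abbreviate the summand as
$$
T_{a,k,i}:=\frac{\binom{n^{a-k}-n^{a-k-1}}{i}\binom{M-in^k}{r-i}}{\binom{M}{r}}.
$$
Since the outer summation over $i$ has only finitely many ($r$) terms, it suffices to treat each $i$ separately and understand $\sum_{k=1}^{a-1}T_{a,k,i}$ as $a\to\infty$. The only genuine difficulty is that the range of the inner summation grows with $a$, so the limit cannot simply be pulled through a fixed finite sum.

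First I would compute the pointwise limit of each summand for fixed $i,k$. Using the elementary asymptotics $\binom{x}{j}=x^j/j!+O(x^{j-1})$ as $x\to\infty$, together with $n^{a-k}-n^{a-k-1}=n^{a-k-1}(n-1)$ and the fact that $in^k$ stays bounded while $M\to\infty$, one gets
$$
\binom{n^{a-k-1}(n-1)}{i}\sim\frac{(n^{a-k-1}(n-1))^i}{i!},\qquad \binom{M-in^k}{r-i}\sim\frac{M^{r-i}}{(r-i)!},\qquad \binom{M}{r}\sim\frac{M^r}{r!}.
$$
Combining these and inserting $M=n^{a-1}(n-1)$ yields
$$
\lim_{a\to\infty}T_{a,k,i}=\binom{r}{i}\frac{(n^{a-k-1}(n-1))^i}{M^i}=\binom{r}{i}n^{-ik},
$$
because $(n^{a-k-1}/n^{a-1})^i=n^{-ik}$.

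The hard part will be justifying the interchange of the limit with the inner summation over $k$, whose upper bound tends to infinity; for this I would produce one summable majorant independent of $a$. Using the monotonicity $\binom{M-in^k}{r-i}\le\binom{M}{r-i}$ together with the bound $\binom{M}{r-i}/\binom{M}{r}=\frac{r!}{(r-i)!}\prod_{s=1}^{i}(M-r+s)^{-1}\le\frac{r!}{(r-i)!}(M-r)^{-i}$ and the crude estimate $\binom{n^{a-k-1}(n-1)}{i}\le (n^{a-k-1}(n-1))^i/i!$, one obtains
$$
T_{a,k,i}\le\binom{r}{i}\left(\frac{n^{a-k-1}(n-1)}{M-r}\right)^i.
$$
For all $a$ large enough that $M-r\ge\frac{1}{2}M$ (which holds since $M\to\infty$ and $r$ is fixed), this gives the uniform bound $T_{a,k,i}\le\binom{r}{i}2^i n^{-ik}$, and $\sum_{k\ge1}\binom{r}{i}2^i n^{-ik}=\binom{r}{i}2^i/(n^i-1)<\infty$. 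Tannery's theorem (the discrete analogue of dominated convergence) then permits exchanging the limit with the sum over $k$, and the exchange over the finitely many values of $i$ is automatic.

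Finally, carrying out the interchange and summing the geometric series $\sum_{k\ge1}n^{-ik}=1/(n^i-1)$ gives
$$
\lim_{a\to\infty}\sum_{i=1}^{r}\sum_{k=1}^{a-1}T_{a,k,i}=\sum_{i=1}^{r}\binom{r}{i}\sum_{k=1}^{\infty}n^{-ik}=\sum_{i=1}^{r}\binom{r}{i}\frac{1}{n^i-1}=\eta_{n,r+1}-1,
$$
which is the asserted value. Thus the whole argument reduces to the binomial-ratio asymptotics plus a single uniform summable bound, and I expect the construction of that bound to be the main technical obstacle, the remaining estimates being routine.
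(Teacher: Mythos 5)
Your proof is correct, and it reaches the same two computational facts the paper relies on --- the summand for fixed $k,i$ behaves like $\binom{r}{i}n^{-ik}$, and the resulting geometric series sums to $\binom{r}{i}/(n^i-1)$ --- but you organize the limit interchange differently. The paper never invokes a dominated-convergence principle: it proves the upper bound by the termwise estimate $Q(a,k,i)\le\binom{r}{i}n^{-ki}$ (essentially your majorant with constant $1$ in place of $2^i$, obtained by factoring the binomial ratio into elementary fractions $Q_{1,j}$ and $Q_{2,j}$), and then separately proves a matching lower bound by showing $Q(a,k,i)\ge\binom{r}{i}n^{-ki}\bigl(1-r/n^{a-k-1}\bigr)^r$ on the range $k\le a-2-\log_n r$ and truncating further to $k\le\alpha(a)$ so that the correction factor is uniformly $\bigl(1-r/n^{a/2}\bigr)^r\to1$. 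Your appeal to Tannery's theorem replaces that entire lower-bound computation, which is the more delicate half of the paper's argument; in exchange you must verify the pointwise limit and exhibit an $a$-independent summable majorant, both of which you do correctly (the factor $2^i$ from $M-r\ge M/2$ is harmless). The one point worth making explicit is the convention $\binom{x}{j}=0$ when $x<j$ (in particular when $M-in^k<0$), which is needed both for your monotonicity bound $\binom{M-in^k}{r-i}\le\binom{M}{r-i}$ and for the nonnegativity of the terms; the paper handles this by observing that $Q(a,k,i)=0$ once $k$ exceeds a threshold $\gamma(a,i)$. Also, your displayed ``pointwise limit'' still contains $a$ on the right-hand side before you simplify to $\binom{r}{i}n^{-ik}$; that is only a notational slip. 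Overall your route is shorter and arguably cleaner, at the cost of citing a named theorem where the paper is self-contained.
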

\begin{proof}[of lemma~\ref{l33}]
By the definition of $\eta_{n,r+1}$, it is enough to show the equality
$$
\lim\limits_{a\to\infty}\sum\limits_{k=1}^{a-1}Q(a,k,i)={r\choose i}\frac{1}{n^i-1},
$$
where we define the numbers $Q(a,k,i)$  for any   $a\geq 1$ and any $i\in\{1,\ldots, r\}$, $k\in\{1,\ldots, a-1\}$ such that $n^a-n^{a-1}\geq r$ in the following way:
$$
Q(a,k,i):=\frac{{n^{a-k}-n^{a-k-1}\choose i}\cdot {n^a-n^{a-1}-n^ki\choose r-i}}{{n^a-n^{a-1}\choose r}}.
$$
Then we have
$$
Q(a,k,i)={r\choose i}\cdot \prod\limits_{j=0}^{i-1}Q_{1,j}(a,k,i)\cdot \prod\limits_{j=0}^{r-i-1}Q_{2,j}(a,k,i),
$$
where
\begin{eqnarray*}
Q_{1,j}(a,k,i)&:=&\frac{n^{a-k}-n^{a-k-1}-j}{n^a-n^{a-1}-j},\;\;\;0\leq j\leq i-1,\\
Q_{2,j}(a,k,i)&:=&\frac{n^{a}-n^{a-1}-n^ki-j}{n^a-n^{a-1}-i-j},\;\;\;0\leq j\leq r-i-1.
\end{eqnarray*}
Let us denote
$$
\gamma(a,i):=\log_n(n^a-n^{a-1}-r+i)-\log_ni.
$$
If  $\gamma(a,i)<k\leq a-1$, then  $Q(a,k,i)=0$, and hence
$$
\sum\limits_{k=1}^{a-1}Q(a,k,i)=\sum\limits_{k=1}^{\delta(a,i)}Q(a,k,i),
$$
where $\delta(a,i):=\min(a-1,\lfloor\gamma(a,i)\rfloor)$. If  $1\leq k\leq \gamma(a,i)$, then we have:
$0<Q_{1,j}(a,k,i)\leq 1/n^k$ for every $0\leq j\leq i-1$, and $0<Q_{2,j}(a,k,i)\leq 1$ for every $0\leq j\leq r-i-1$. Since $\lim\limits_{a\to\infty} \delta(a,i)=\infty$, we obtain:
\begin{equation}\label{nnn1}
\lim\limits_{a\to\infty}\sum\limits_{k=1}^{a-1}Q(a,k,i)\leq \lim\limits_{a\to\infty}{r\choose i}\sum\limits_{k=1}^{\delta(a,i)}\frac{1}{n^{ki}}\leq {r\choose i}\sum\limits_{k=1}^{\infty}\frac{1}{n^{ki}}={r\choose i}\frac{1}{n^i-1}.
\end{equation}

Let us denote $\beta(a):=a-2-\log_n r$. In particular, we have $\beta(a)\leq \delta(a,i)$ for every  $1\leq i\leq r$. If  $1\leq k\leq \beta(a)$, then for every  $0\leq j\leq i-1$, we have:
\begin{eqnarray*}
Q_{1,j}(a,k,i)> \frac{n^{a-k}-n^{a-k-1}-i}{n^a-n^{a-1}-i}\geq \frac{n^{a-k}-n^{a-k-1}-i}{n^a-n^{a-1}}=\\
=\frac{1}{n^k}-\frac{i}{n^a-n^{a-1}}\geq \frac{1}{n^k}-\frac{i}{n^{a-1}}=\frac{1}{n^k}\left(1-\frac{i}{n^{a-k-1}}\right)\geq\\
\geq \frac{1}{n^k}\left(1-\frac{r}{n^{a-k-1}}\right)>0.
\end{eqnarray*}
For every $0\leq j\leq r-i-1$, we also have:
\begin{eqnarray*}
Q_{2,j}(a,k,i)>\frac{n^a-n^{a-1}-n^ki-(r-i)}{n^a-n^{a-1}-i-(r-i)}\geq \frac{n^a-n^{a-1}-n^ki-r+i}{n^a-n^{a-1}}=\\
=1-\frac{n^ki+r-i}{n^{a}-n^{a-1}}\geq 1-\frac{n^ki+r-i}{n^{a-1}}\geq 1-\frac{r}{n^{a-k-1}}>0.
\end{eqnarray*}
Thus, if $1\leq k\leq \beta(a)$, then
$$
Q(a,k,i)\geq {r\choose i}\cdot\frac{1}{n^{ki}}\left(1-\frac{r}{n^{a-k-1}}\right)^r.
$$
Let us  denote $\alpha(a):=\lfloor \beta(a)/2\rfloor$. Then  $n^{a-k-1}\geq n^{a/2}$ for every $1\leq k\leq \alpha(a)$.
Consequently
$$
\sum\limits_{k=1}^{a-1}Q(a,k,i)\geq {r\choose i}\sum\limits_{k=1}^{\lfloor\beta(a)\rfloor}\frac{1}{n^{ki}}\left(1-\frac{r}{n^{a-k-1}}\right)^r\geq {r\choose i} \left(1-\frac{r}{n^{a/2}}\right)^r\sum\limits_{k=1}^{\alpha(a)}\frac{1}{n^{ki}}.
$$
Since $\lim\limits_{a\to\infty}\alpha(a)=\infty$, we obtain:
\begin{equation}\label{nnn}
\lim\limits_{a\to\infty}\sum\limits_{k=1}^{a-1}Q(a,k,i)\geq \lim\limits_{a\to\infty} {r\choose i} \left(1-\frac{r}{n^{a/2}}\right)^r\sum\limits_{k=1}^{\alpha(a)}\frac{1}{n^{ki}}={r\choose i}\frac{1}{n^i-1}.
\end{equation}
The claim now follows from (\ref{nnn1})--(\ref{nnn}).\qed
\end{proof}

We are ready now to prove our second main result.
{
\renewcommand{\thetheorem}{\ref{t2}}
\begin{theorem}
For all  $n\geq 2$ and $m\geq 1$, we have $\xi_{n,m}\leq 1/\eta_{n,m}$. In particular $\lim_{m\to\infty}\xi_{n,m}=0$ for every $n\geq 2$.
\end{theorem}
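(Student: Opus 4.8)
The plan is to realize the bound along the one-parameter family of length distributions $L=L_a:=(1,\underbrace{a,\ldots,a}_{m-1})$ and let $a\to\infty$. For $a$ large enough one has $\sigma_n(L_a)=n^{-1}+(m-1)n^{-a}\le 1$, so $L_a\in\mathcal{L}_{n,m}$, and since $\xi_{n,m}\le\rho_{n,L_a}$ for every such $a$, it suffices to prove $\rho_{n,L_a}\le 1/S(a)$ for a quantity $S(a)$ with $\lim_{a\to\infty}S(a)=\eta_{n,m}$. First I would compute $|PR_n(L_a)|$ from~(\ref{kraft}): with values $\nu_1=1<\nu_2=a$ and multiplicities $r_1=1$, $r_2=m-1$ one gets $N_1=n$ and $N_2=n^{a}-n^{a-1}$, hence $|PR_n(L_a)|=n\,(m-1)!\,\binom{n^{a}-n^{a-1}}{m-1}$. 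The desired inequality $\rho_{n,L_a}\le 1/S(a)$ is equivalent to the lower bound $|UD_n(L_a)|\ge |PR_n(L_a)|\cdot S(a)$, so the whole problem reduces to exhibiting and counting sufficiently many uniquely decodable codes with this length distribution.

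The core step is the construction. Fix the length-one code word $c$ (there are $n$ choices, and every code in $UD_n(L_a)$ has a unique such word, so I may count for fixed $c$ and multiply by $n$). For each $k\in\{1,\ldots,a-1\}$ and $i\in\{1,\ldots,m-1\}$ I would form the codes whose length-$a$ words split into two groups: $i$ words of the shape $c^{k}u$, where $u$ ranges over distinct words of length $a-k$ not beginning with $c$ (so each such word starts with exactly $k$ copies of $c$), chosen in $\binom{n^{a-k}-n^{a-k-1}}{i}$ ways; and $m-1-i$ words not beginning with $c$ and avoiding the forbidden set $\{u\,x:\ |x|=k,\ u\text{ a chosen suffix}\}$ of size $i\,n^{k}$, chosen in $\binom{n^{a}-n^{a-1}-i\,n^{k}}{m-1-i}$ ways. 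Ordering the $m-1$ words into the $m-1$ positions contributes $(m-1)!$. Together with the $\binom{n^{a}-n^{a-1}}{m-1}$ pure prefix codes (the case with no $c$-starting word, accounting for the leading $1$), summing over $k$ and $i$ produces exactly $|PR_n(L_a)|\cdot S(a)$, where $S(a)$ is the bracketed sum displayed in the statement.

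The main obstacle is to prove that each constructed code is uniquely decodable, which I would settle with the Sardinas--Patterson algorithm. The only proper-prefix relations among code words arise from $c$ being a prefix of the $c$-starting words, so the first set of dangling suffixes is $\{c^{k-1}u\}$; peeling off one $c$ at a time gives $S_\ell=\{c^{k-\ell}u\}$ for $1\le\ell\le k$, reaching $S_k=\{u\}$ at the last step. The forbidden set was removed precisely so that no code word equals $u\,x$ with $|x|=k$, which forces $S_{k+1}=\emptyset$; as no $S_\ell$ ever contains a code word (each has length strictly between $1$ and $a$ and never equals $c$), the code is uniquely decodable. I would also verify that the families for distinct pairs $(k,i)$ are pairwise disjoint and disjoint from the prefix codes, since the common exact $c$-prefix length $k$ and the number $i$ of $c$-starting words are invariants of the code; hence there is no overcounting. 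This establishes $|UD_n(L_a)|\ge |PR_n(L_a)|\cdot S(a)$ and therefore $\rho_{n,L_a}\le 1/S(a)$.

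Finally I would invoke Lemma~\ref{l33} with $r=m-1$: the double sum in $S(a)$ tends to $\eta_{n,m}-1$, so $S(a)\to\eta_{n,m}$, and consequently $\xi_{n,m}\le\lim_{a\to\infty}1/S(a)=1/\eta_{n,m}$. The concluding limit is then immediate from the inequality $\eta_{n,m}\ge(1+1/n)^{m-1}$ recorded before the theorem: since $(1+1/n)^{m-1}\to\infty$ as $m\to\infty$, we obtain $0\le\xi_{n,m}\le 1/\eta_{n,m}\to 0$, whence $\lim_{m\to\infty}\xi_{n,m}=0$ for every $n\ge 2$.
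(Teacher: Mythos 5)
Your proposal is correct and follows essentially the same route as the paper: the same family $L=(1,a,\ldots,a)$, the same construction of non-prefix uniquely decodable codes indexed by the one-letter word, the exact $c$-prefix length $k$ and the count $i$ (the paper's sets $K_{x,k,i}$), the same Sardinas--Patterson verification and disjointness check, and the same passage to the limit via Lemma~\ref{l33}. No substantive differences to report.
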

\addtocounter{theorem}{-1}
}
\begin{proof}[of Theorem~\ref{t2}]
The claim is obvious in the case $m=1$. So, let us assume that $m\geq 2$ and let us denote $r:=m-1$. Let $X$ be an $n$-letter alphabet and let  $a\geq 2$ be an arbitrary positive integer which satisfies $n^a-n^{a-1}\geq r$. Then the  sequence $L:=(1,a,\ldots, a)$ of length $m$ belongs to the set $\mathcal{L}_{n,m}$. By Lemma~\ref{l33}, it is enough to show the inequality
\begin{equation}\label{kkjk}
\rho_{n,L}\leq \left(1+\sum\limits_{i=1}^r\sum\limits_{k=1}^{a-1}\frac{{n^{a-k}-n^{a-k-1}\choose i}\cdot {n^a-n^{a-1}-in^k\choose r-i}}{{n^a-n^{a-1}\choose r}}\right)^{-1}.
\end{equation}

To show~(\ref{kkjk}), we consider the set $K_{x,k,i}$ ($x\in X$, $1\leq i\leq r$, $1\leq k\leq a-1$)  of all codes of length $m$ over the alphabet $X$ such that  the letter $x$ is a one-letter code word placed in the first position and the remaining $r=m-1$ code words form the set:
$$
\{x^kw_1, \ldots, x^kw_i\}\cup\{v_1, \ldots, v_{r-i}\},
$$
where $w_s$ ($1\leq s\leq i$) and $v_t$ ($1\leq t\leq r-i$) are pairwise different words satisfying the following conditions:
\begin{itemize}
\item[(i)] $|w_s|=a-k$ and $|v_t|=a$ for all $1\leq s\leq i$, $1\leq t\leq r-i$,
\item[(ii)] none of  $w_s$'s and none of  $v_t$'s  begins with  $x$,
\item[(iii)] none of  $w_s$'s  is a prefix of any   $v_t$'s.
\end{itemize}
Obviously $L$ is the length distribution of any code from  $K_{x,k,i}$.

Let $C\in K_{x,k,i}$ be arbitrary and let us apply the Sardinas-Patterson algorithm (\cite{3}) to the code $C$, i.e. we define the sets $D_j$ ($j\geq 0$)  recursively as follows: $D_0$ is the set of the code words, and  for  $j\geq 1$ the set  $D_j$ is the set  of all non-empty words $w\in X^*$ which satisfy  the following condition: $D_{j-1}w\cap D_0\neq \emptyset$ or $D_0w\cap D_{j-1}\neq\emptyset$, where $D_jw:=\{vw\colon v\in D_j\}$. By the conditions (i)-(iii), we have:
$$
D_j=\left\{
\begin{array}{ll}
\{x^{k-j}w_1, \ldots, x^{k-j}w_i\},&\mbox{\rm if}\;1\leq j\leq k,\\
\emptyset,&\mbox{\rm if}\;j>k.
\end{array}
\right.
$$
Thus $D_j\cap D_0=\emptyset$ for every $j\geq 1$, which means that $C$ is uniquely decodable. Consequently $K_{x,k,i}\subseteq UD_n(L)$. Further, since no code in $K_{x,k,i}$ is a prefix code and for any  $x,x'\in X$, $1\leq k,k'\leq a-1$ and $1\leq i,i'\leq r$ the inequality $(x',k',i')\neq (x,k,i)$ implies $K_{x',k',i'}\cap K_{x,k,i}=\emptyset$, we obtain
\begin{equation}\label{eq11}
|UD_n(L)|\geq |PR_n(L)|+\sum\limits_{x\in X, 1\leq k\leq a-1, 1\leq i\leq r}|K_{x,k,i}|.
\end{equation}
Since $|PR_n(L)|=n\cdot r!\cdot {n^a-n^{a-1}\choose r}$,  the equality~(\ref{kkjk}) easily follows from (\ref{eq11}) and from the following lemma.

\begin{lemma}\label{lem4}
$|K_{x,k,i}|=r!\cdot{n^{a-k}-n^{a-k-1}\choose i}\cdot {n^a-n^{a-1}-in^k\choose r-i}$ for any $x\in X$, $1\leq i\leq r$ and $1\leq k\leq a-1$.
\end{lemma}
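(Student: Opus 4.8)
The plan is to count the codes in $K_{x,k,i}$ directly as ordered tuples and to factor the count into an ordering part and a selection part. A code in $K_{x,k,i}$ is a sequence $(v_1,\ldots,v_m)$ whose first entry is the fixed one-letter word $x$ and whose remaining $r$ entries are precisely the $r$ pairwise distinct words of the prescribed set $\{x^kw_1,\ldots,x^kw_i\}\cup\{v_1,\ldots,v_{r-i}\}$. Since the $r$ words are distinct, arranging them into the positions $2,\ldots,m$ contributes a factor $r!$, and thus $|K_{x,k,i}|=r!\cdot N$, where $N$ is the number of admissible \emph{unordered} sets of the required form. It then remains to show that $N$ equals the product of the two binomial coefficients in the statement.

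First I would count the choices for the words $w_1,\ldots,w_i$. By condition (i)--(ii), each $w_s$ is a word of length $a-k$ not beginning with $x$, and there are exactly $(n-1)n^{a-k-1}=n^{a-k}-n^{a-k-1}$ such words. Because the words $x^kw_s$ are pairwise distinct if and only if the $w_s$ are, selecting the unordered $i$-element collection $\{w_1,\ldots,w_i\}$ contributes the factor ${n^{a-k}-n^{a-k-1}\choose i}$.

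Next, with $w_1,\ldots,w_i$ fixed, I would count the admissible words $v_t$: these are the length-$a$ words not beginning with $x$ (condition (ii)) from which one deletes those having some $w_s$ as a prefix (condition (iii)). The number of length-$a$ words not starting with $x$ is $n^a-n^{a-1}$, and for each fixed $s$ the length-$a$ words having $w_s$ as a prefix are exactly the $n^k$ words of the form $w_su$ with $|u|=k$. The one step that is not purely mechanical, and which I expect to be the main point to verify, is that these $i$ forbidden families are pairwise disjoint and all lie inside the set of words not beginning with $x$, so that the exclusion is additive and consistent with (ii). Disjointness holds because two distinct words $w_s,w_{s'}$ of the common length $a-k$ cannot both be a prefix of the same word; and each $w_su$ begins with $w_s$, hence not with $x$, so no forbidden word is counted outside the (ii)-admissible ones.

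Consequently exactly $n^a-n^{a-1}-in^k$ admissible words $v$ remain, and, since each such $v$ automatically differs from every $x^kw_s$ (the former does not begin with $x$, whereas the latter does, as $k\geq 1$), choosing the unordered $(r-i)$-element set $\{v_1,\ldots,v_{r-i}\}$ contributes the factor ${n^a-n^{a-1}-in^k\choose r-i}$. Multiplying the two selection factors gives $N={n^{a-k}-n^{a-k-1}\choose i}\cdot{n^a-n^{a-1}-in^k\choose r-i}$, and therefore $|K_{x,k,i}|=r!\cdot N$, which is the asserted formula.
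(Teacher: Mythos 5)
Your proof is correct and follows essentially the same counting as the paper: select the $w_s$'s, select the $v_t$'s after excluding the $i$ pairwise disjoint families of $n^k$ extensions of the $w_s$'s, and account for the orderings. The only cosmetic difference is that you factor out the full $r!$ for arrangements at the start, whereas the paper splits it as ${r\choose i}\,i!\cdot(r-i)!$ while interleaving selection and placement.
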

\begin{proof}[of Lemma~\ref{lem4}]
Every code $C\in K_{x,k,i}$ can be constructed as follows. At first, we choose arbitrarily the words $w_s$ ($1\leq s\leq i$)  among the words of length $a-k$ which do not begin with $x$. The number of such available words is equal to $n^{a-k}-n^{a-k-1}$, and hence, we can choose the words $w_s$  in   ${n^{a-k}-n^{a-k-1}\choose i}$ ways. Next, we form the code words  $x^kw_s$ ($1\leq s\leq i$) and arrange them in the sequence $C$. We have to choose $i$ positions for them and arbitrarily  arrange within these positions. Thus the number of ways we can construct and arrange the code words $x^kw_s$ ($1\leq s\leq i$)    is equal to
$$
{n^{a-k}-n^{a-k-1}\choose i}\cdot{r\choose i}\cdot i!={n^{a-k}-n^{a-k-1}\choose i}\cdot\frac{r!}{(r-i)!}.
$$

In the next step, we construct the code words $v_t$ ($1\leq t\leq r-i$). We can choose them among the words of length $a$ which do not begin with the letter $x$. The number of such available words is equal $n^a-n^{a-1}$. We should also remember that none of these code words begins with any of $w_s$'s. Since there are $in^k$ words of length  $a$ which begin with one of  $w_s$'s and none of these words begins with the letter  $x$, the number of  available words for the code words $v_t$ ($1\leq t\leq r-i$) is equal to $n^a-n^{a-1}-in^k$. Finally, we arrange the chosen code words  in the sequence $C$ within the remaining $r-i$ free positions. Consequently, the number of ways we can construct the code words $v_t$ ($1\leq t\leq r-i$) and arrange them in $C$ is equal to
$$
{n^a-n^{a-1}-in^k\choose r-i}\cdot (r-i)!.
$$
The claim now directly follows from the above construction. This completes the proof of Theorem~\ref{t2}.\qed
\end{proof}
\end{proof}

In the  next result, we provide another upper bound for the infimum $\xi_{n,m}$.
{
\renewcommand{\thetheorem}{\ref{t4}}
\begin{theorem}
For all  $n\geq 2$ and $m\geq 1$, we have $\xi_{n,m}\leq 1-(m-1)/n^{\lceil\log_nm\rceil}$.
\end{theorem}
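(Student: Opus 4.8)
The plan is to prove the bound by exhibiting, for each $k$, a single length distribution of length $m$ whose prefix/UD ratio is explicitly computable, and then letting $k\to\infty$. Throughout write $a:=\lceil\log_n m\rceil$, so that $n^{a}\geq m$ and in particular $\theta:=(m-1)/n^{a}\in[0,1)$. The case $m=1$ is trivial, since then the asserted bound equals $1$ and $\xi_{n,1}=1$, so assume $m\geq 2$. For every integer $k\geq 2$ I would consider the sequence
\[
L_k:=(\underbrace{a,\ldots,a}_{m-1},\,ka),
\]
which has length $m$ and satisfies $a\mid ka$, hence is exactly of the divisible form $(a,\ldots,a,b)$ treated in~\cite{2}.

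First I would check $L_k\in\mathcal{L}_{n,m}$, i.e. $\sigma_n(L_k)\le 1$. Since $n^{a}\geq m$ we have $1-\theta=(n^{a}-m+1)/n^{a}\geq n^{-a}$, and hence $\sigma_n(L_k)=\theta+n^{-ka}\le\theta+n^{-a}\le\theta+(1-\theta)=1$. Next I would record the two exact cardinalities. By the Kraft formula~(\ref{kraft}) applied with $\nu_1=a$, $r_1=m-1$ and $\nu_2=ka$, $r_2=1$ (so that $N_2=n^{(k-1)a}(n^{a}-(m-1))$), one gets $|PR_n(L_k)|=P\cdot\bigl(n^{ka}-(m-1)n^{(k-1)a}\bigr)$, where $P:=\prod_{s=0}^{m-2}(n^{a}-s)$ is the number of ordered $(m-1)$-tuples of distinct length-$a$ words. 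The companion formula from~\cite{2} gives $|UD_n(L_k)|=P\cdot\bigl(n^{ka}-(m-1)^k\bigr)$: the $m-1$ short words contribute the same factor $P$, while the single long word of length $ka$ may be chosen freely among all $n^{ka}$ words of that length except the $(m-1)^k$ words that are concatenations of $k$ of the chosen short words (these being precisely the choices that destroy unique decodability, and being pairwise distinct because the short words are).

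Dividing, the common factor $P$ cancels and, after dividing numerator and denominator by $n^{ka}$,
\[
\rho_{n,L_k}=\frac{n^{ka}-(m-1)n^{(k-1)a}}{n^{ka}-(m-1)^k}=\frac{1-\theta}{1-\theta^{k}}.
\]
Since $\xi_{n,m}=\inf_{L\in\mathcal{L}_{n,m}}\rho_{n,L}\le\rho_{n,L_k}$ for every $k\geq 2$, and since $0\le\theta<1$ forces $\theta^{k}\to 0$ and hence $\rho_{n,L_k}\to 1-\theta$ as $k\to\infty$, I conclude $\xi_{n,m}\le 1-\theta=1-(m-1)/n^{\lceil\log_n m\rceil}$, as required.

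The genuinely delicate input is the exact value of $|UD_n(L_k)|$ --- that the only obstruction to unique decodability is the long word equalling a $k$-fold concatenation of short words, and that there are exactly $(m-1)^k$ such forbidden words. This is supplied by the formulas of~\cite{2}, so within the present argument no new combinatorial identity is needed; the remaining effort is the elementary Kraft check and the one-line limit $\theta^{k}\to 0$. It is worth noting that each individual $\rho_{n,L_k}=(1-\theta)/(1-\theta^{k})$ is strictly larger than $1-\theta$, so no single sequence in this family attains the bound --- the estimate on $\xi_{n,m}$ is obtained only in the limit $k\to\infty$.
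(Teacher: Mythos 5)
Your proposal is correct and follows essentially the same route as the paper: both use the divisible-form sequences $(a,\ldots,a,b)$ with $a=\lceil\log_n m\rceil$ and the exact formulas for $|PR_n(L)|$ and $|UD_n(L)|$ from~\cite{2}, obtaining the ratio $(1-\theta)/(1-\theta^{b/a})$ and passing to the limit as the long word's length grows. Your parametrization $b=ka$, $k\to\infty$ is just the paper's ``fix $a$, let $b\to\infty$'' in different clothing, and your extra verifications (the Kraft-sum check and the rederivation of $|PR_n(L_k)|$ from formula~(\ref{kraft})) are sound but not needed beyond what the cited formulas already give.
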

\addtocounter{theorem}{-1}
}
\begin{proof}[of Theorem~\ref{t4}]
Let $a$ and $b$ be natural numbers such that $a$ divides $b$ and let  $L=(a,\ldots,a,b)$ be a sequence of length $|L|=m$ (i.e. the first $m-1$ positions are equal to $a$, and the last position is equal to $b$). In \cite{2} (see Section~3.2, \cite{2}), we derived the following formulae:
\begin{eqnarray}
|UD_n(L)|=n^a(n^a-1)\ldots(n^a-m+2)(n^b-(m-1)^{b/a}),\\
|PR_n(L)|=n^a(n^a-1)\ldots(n^a-m+2)(n^b-(m-1)n^{b-a}).
\end{eqnarray}
Directly by the above formulae, we see that $L\in \mathcal{L}_{n,m}$ (or, equivalently: $|UD_n(L)|>0$) if and only if $n^a\geq m$ if and only if $a\geq \lceil \log_nm\rceil$, and then, we obtain:
$$
\rho_{n,L}=\frac{|PR_n(L)|}{|UD_n(L)|}=\frac{1-\delta^a}{1-\delta^b},
$$
where $\delta:=\frac{(m-1)^{1/a}}{n}$. But, for any fixed $a\geq \lceil \log_nm\rceil$, we have $0\leq \delta<1$, and then, for sufficiently large $b$, the quotient $\rho_{n,L}$ can be arbitrarily close to
$$
1-\delta^a=1-(m-1)/n^a.
$$
In particular,  if we take $a:=\lceil \log_nm\rceil$, then we see that for a suitably chosen  $L\in\mathcal{L}_{n,m}$, the quotient $\rho_{n,L}$ can be arbitrarily close to the value $1-(m-1)/n^{\lceil\log_nm\rceil}$, and hence, this value cannot be smaller than $\xi_{n,m}$. \qed

\end{proof}

\section{The case $|L|=2$}\label{r4}

To derive the formula for $|UD_n(L)|$ in the case $|L|=2$, we use the following nice characterization of uniquely decodable codes of length two (below, we refer to the zero-power of a word as the empty word).

\begin{proposition}[\cite{1}]\label{pomoc}
A code $(v, w)\in X^*\times X^*$ is not uniquely decodable if and only if there is $u\in X^*$ such that $v=u^{n_1}$, $w=u^{n_2}$ for some $n_1, n_2\geq 0$.
\end{proposition}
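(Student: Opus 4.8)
The plan is to prove the two implications separately. The converse (``if'') direction is elementary, so the whole weight of the argument falls on the direct (``only if'') direction, which is a two-word instance of the classical \emph{defect} phenomenon in combinatorics on words.

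For the ``if'' direction, assume $v=u^{n_1}$ and $w=u^{n_2}$. If $n_1=n_2$ then $v=w$, so the code is not even injective and hence not uniquely decodable. If $n_1\neq n_2$, then the single word $v^{n_2}=u^{n_1n_2}=w^{n_1}$ admits two factorizations over $\{v,w\}$ using $n_2$ and $n_1$ code words respectively; since $n_1\neq n_2$ these have different lengths, so the defining condition of unique decodability fails. (When some exponent is $0$ the corresponding word is empty and the failure is even more immediate.)

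For the ``only if'' direction, suppose $(v,w)$ is not uniquely decodable; I may assume $v\neq w$, since otherwise $u=v$ works. Non-unique-decodability produces two \emph{distinct} finite sequences over $\{v,w\}$ whose concatenations are equal. First I would cancel the longest common prefix of these two code-word sequences: at the first position where they disagree, one sequence begins with $v$ and the other with $w$. Reading off the underlying string, this forces $v$ and $w$ to share a common prefix of length $\min(|v|,|w|)$; since both are prefixes of the same string and $v\neq w$, the shorter word is a proper prefix of the longer, say $|v|<|w|$ and $w=vp$ with $p\neq\varepsilon$. The plan is then to run an induction on $|v|+|w|$: substituting $w=vp$ should convert the given ambiguity into a nontrivial relation over the strictly smaller pair $(v,p)$, so that $(v,p)$ is again not uniquely decodable; by the induction hypothesis $v=u^{a}$ and $p=u^{b}$ for a common word $u$, and then $w=vp=u^{a+b}$, which closes the induction.

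The hard part will be the substitution step that feeds the induction: one must track how each of the two differing $\{v,w\}$-factorizations behaves when every occurrence of $w$ is rewritten as $vp$, and verify that the resulting pair of $\{v,p\}$-factorizations remains genuinely distinct rather than collapsing to a trivial identity. An equivalent and perhaps cleaner way to organize this obstacle is to prove directly that the relation forces $vw=wv$ and then invoke the classical fact that two commuting words are powers of a common root (a Fine--Wilf / Lyndon--Schützenberger argument); alternatively, since the Sardinas--Patterson sets $D_j$ are already available from the proof of Theorem~\ref{t2}, one can compute these sets for the two-word code $\{v,w\}$ and observe that the Euclidean-type descent on $|v|$ and $|w|$ terminates in a code word or the empty word precisely when $v$ and $w$ admit a common root $u$ with $|u|=\mathrm{gcd}(|v|,|w|)$. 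Either way, the periodicity and cancellation bookkeeping is where the real effort lies.
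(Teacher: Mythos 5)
You should first note what the paper itself does with this statement: it offers no proof at all. Proposition~\ref{pomoc} is imported from Blum's note \cite{1} and used as a black box in the proof of Theorem~\ref{t3}, so there is no paper argument to match your attempt against; it must stand on its own, and as written it does not. Your ``if'' direction is complete and correct. The ``only if'' direction, however, is a plan rather than a proof: the step you yourself flag as ``the hard part'' --- verifying that after rewriting $w=vp$ the two distinct $\{v,w\}$-factorizations remain distinct as $\{v,p\}$-factorizations --- is precisely the defect-theorem content of the proposition, i.e., its entire substance, and it is left open. The fallback routes you mention do not close it: extracting $vw=wv$ from an arbitrary nontrivial relation requires the same cancellation bookkeeping, and then invoking Lyndon--Sch\"utzenberger is invoking a statement essentially equivalent to the one being proved; the Sardinas--Patterson suggestion is likewise only a gesture.

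The gap is fillable along exactly your lines, but two repairs are needed. First, dispose of degenerate cases before the induction: if $v=w$, or $v=\varepsilon$, or $w=\varepsilon$, the conclusion is immediate; this is not optional, because when $v=\varepsilon$ you get $p=w$ and $|v|+|p|=|v|+|w|$, so your induction measure does not decrease. (Excluding empty words also legitimizes the prefix comparison: after cancelling the common prefix of the two index sequences, both remainders are nonempty.) Second, the flagged substitution step: if $p=v$ then $w=v^2$ and you are done outright, so assume $v\neq p$, and let $\sigma$ replace each occurrence of $w$ in a factorization by the block $(v,p)$. In any image of $\sigma$, every occurrence of $p$ is immediately preceded by the $v$ produced from the same $w$; hence a sequence in the image determines its preimage uniquely (pair each $p$ with the $v$ just before it, and leave the remaining $v$'s alone), so $\sigma$ is injective on factorizations. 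Therefore the two images are still distinct, have equal concatenations since $w=vp$, and witness that $(v,p)$ is not uniquely decodable with $|v|+|p|<|v|+|w|$; your induction then closes as you describe.
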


We are ready  now to prove Theorem~\ref{t3}.
{
\renewcommand{\thetheorem}{\ref{t3}}
\begin{theorem}
$|UD_n((a,b))|=n^{a+b}-n^{{\rm gcd}(a,b)}$ for any integers $a, b\geq 1$.
\end{theorem}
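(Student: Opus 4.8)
The plan is to count the complement. Since there are exactly $n^a$ words of length $a$ and $n^b$ words of length $b$, the collection of all codes $(v,w)$ with length distribution $(a,b)$ has cardinality $n^{a+b}$, with no constraint yet imposed. It therefore suffices to show that the number of such codes that are \emph{not} uniquely decodable equals $n^{\gcd(a,b)}$, and then subtract. I would extract this count directly from the characterization in Proposition~\ref{pomoc}.

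First I would reformulate Proposition~\ref{pomoc} in terms of primitive roots. Recall that a non-empty word is \emph{primitive} if it is not a proper power of a shorter word, and that every non-empty word $v$ admits a unique primitive word $p$ (its primitive root) together with a unique exponent $k\geq 1$ satisfying $v=p^k$. Since $a,b\geq 1$, both $v$ and $w$ are non-empty, so in the statement of Proposition~\ref{pomoc} the exponents satisfy $n_1,n_2\geq 1$ and the common word $u$ is non-empty. Writing $u=p^j$ with $p$ primitive, we get $v=p^{jn_1}$ and $w=p^{jn_2}$; hence $(v,w)$ fails to be uniquely decodable precisely when $v$ and $w$ share the same primitive root $p$.

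Next I would count the non-uniquely-decodable pairs by grouping them according to this common primitive root. If $p$ is primitive of length $d$ with $v=p^{a/d}$ and $w=p^{b/d}$, then $d$ divides both $a$ and $b$, i.e.\ $d\mid\gcd(a,b)$; conversely every primitive $p$ with $d:=|p|\mid\gcd(a,b)$ yields exactly one such pair $(p^{a/d},p^{b/d})$. By uniqueness of the primitive root this assignment is a bijection between the primitive words whose length divides $\gcd(a,b)$ and the non-uniquely-decodable codes with length distribution $(a,b)$. Writing $\psi_n(d)$ for the number of primitive words of length $d$ over the $n$-letter alphabet, the number of such codes is therefore $\sum_{d\mid\gcd(a,b)}\psi_n(d)$.

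Finally I would invoke the classical identity $\sum_{d\mid N}\psi_n(d)=n^N$, which simply records that each of the $n^N$ words of length $N$ has a unique primitive root whose length divides $N$. Applying it with $N=\gcd(a,b)$ gives $n^{\gcd(a,b)}$ non-uniquely-decodable codes, and subtracting this from $n^{a+b}$ yields the claimed value $n^{a+b}-n^{\gcd(a,b)}$. The step to get right is the reformulation in the second paragraph, in particular the uniqueness of the primitive root: it is what makes ``there exists a common $u$'' equivalent to ``$v$ and $w$ have the same primitive root,'' and what guarantees that the counting map is a genuine bijection rather than an overcount. Once this is secured, the remaining combinatorics is routine.
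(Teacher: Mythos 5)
Your proof is correct, and it pursues the same overall strategy as the paper --- count the complement $NUD_n(a,b)=X^a\times X^b\setminus UD_n((a,b))$ via Proposition~\ref{pomoc} and show it has exactly $n^{\gcd(a,b)}$ elements --- but the mechanism of the count is genuinely different. The paper exhibits one explicit bijection $\pi_{a,b}\colon X^{\gcd(a,b)}\to NUD_n(a,b)$, $u\mapsto\bigl(u^{a/\gcd(a,b)},u^{b/\gcd(a,b)}\bigr)$: injectivity is immediate (the first $\gcd(a,b)$ letters of the first component recover $u$), surjectivity is a short computation from Blum's characterization, and no facts about primitive words enter the main argument. You instead partition $NUD_n(a,b)$ by the common primitive root of the two components, obtain a bijection with the set of primitive words whose length divides $\gcd(a,b)$, and close with the classical identity $\sum_{d\mid N}\psi_n(d)=n^N$. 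Both steps are sound: the uniqueness of the primitive root, which you rightly flag as the crux, is what turns ``there exists a common $u$'' into ``same primitive root'' and makes your grouping map injective rather than an overcount. In fact your argument factors the paper's bijection through the set of primitive words: composing your correspondence with the standard bijection $w\mapsto\sqrt{w}$ between $X^{\gcd(a,b)}$ and the primitive words of length dividing $\gcd(a,b)$ recovers exactly the inverse map $\pi_{a,b}^{-1}((v,w))=\sqrt{w}^{\,\gcd(a,b)/|\sqrt{w}|}$ recorded in the paper's Remark. The paper's route is slightly more self-contained, needing no external divisor-sum identity; yours makes more transparent the structural reason why the answer is $n^{\gcd(a,b)}$, namely that the non-uniquely-decodable pairs are indexed by primitive words whose length divides $\gcd(a,b)$.
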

\addtocounter{theorem}{-1}
}
\begin{proof}
Let $X$ be an $n$-letter alphabet ($n\geq 2$) and $a, b\geq 1$ be any integers. Let us consider  the mapping
$$
\pi_{a, b}\colon u\mapsto \left(u^{a/{{\rm gcd}(a, b)}}, u^{b/{{\rm gcd}(a, b)}}\right),\;\;\;u\in X^{{\rm gcd}(a, b)}.
$$
For every $u\in X^{{\rm gcd}(a, b)}$, we have: $u^{a/{{\rm gcd}(a, b)}}\in X^{a}$ and $u^{b/{{\rm gcd}(a, b)}}\in X^{b}$. Thus $\pi_{a, b}$ is a properly defined one-to-one mapping from the set $X^{{\rm gcd}(a, b)}$ to the set
$$
NUD_n(a, b):=X^{a}\times X^{b}\setminus UD_n((a, b)).
$$
We now show that $\pi_{a, b}$ maps $X^{{\rm gcd}(a, b)}$  onto the set $NUD_n(a, b)$. Let us choose arbitrarily $(v_1, v_2)\in NUD_n(a, b)$. By Proposition~\ref{pomoc}, there is a non-empty word $v\in X^*$ and the integers $n_1, n_2\geq 0$ such that $v_1=v^{n_1}$ and $v_2=v^{n_2}$. In particular, we have $a=dn_1$, $b=dn_2$, where $d:=|v|$. From the last two equalities, we also have the divisibility $d\mid {\rm gcd}(a, b)$. For the word $w:=v^{{\rm gcd}(a, b)/d}$, we have:
$|w|=|v|\cdot {\rm gcd}(a, b)/d={\rm gcd}(a, b)$. Hence $w\in X^{{\rm gcd}(a, b)}$. By the definition of $\pi_{a, b}$, we obtain:
$$
\pi_{a, b}(w)=(w^{a/{{\rm gcd}(a, b)}}, w^{b/{{\rm gcd}(a, b)}})=(v^{a/d}, v^{b/d})=(v^{n_1}, v^{n_2})=(v_1, v_2).
$$
Since $\pi_{a, b}$ defines a bijection between  $X^{{\rm gcd}(a, b)}$ and  $NUD_n(a, b)$, we obtain:
$$
n^{{\rm gcd}(a, b)}=|X^{{\rm gcd}(a, b)}|=|NUD_n(a, b)|=n^{a+b}-|UD_n((a, b))|.
$$
This completes the proof.\qed
\end{proof}

\begin{remark}
It is worth to take notice of the inverse mapping $\pi_{a, b}^{-1}$, as it uses the well-known notion of the root of a word. By definition (see also~\cite{0}), the root of a word $w\in X^*$ is the shortest word $v\in X^*$ (denoted by $\sqrt{w}$) such that $w=v^k$ for some  $k\geq 1$.  For example $\sqrt{010101}=01$,  $\sqrt{\epsilon}=\epsilon$,  $\sqrt{0110}=0110$. By using this notion, one can express the inverse mapping $\pi_{a, b}^{-1}$ as follows:
$$
\pi^{-1}_{a, b}((v,w))=\sqrt{w}^{{\rm gcd}(a, b)/|\sqrt{w}|}=\sqrt{v}^{{\rm gcd}(a, b)/|\sqrt{v}|}
$$
for every $(v,w)\in NUD_n(a, b)$.
\end{remark}

\begin{corollary}\label{cor1}
$\xi_{n,2}=1-1/n$ for every $n\geq 2$.
\end{corollary}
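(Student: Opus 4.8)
The plan is to combine the two exact cardinality formulas available for length-two distributions: the count of prefix codes from~(\ref{1}), namely $|PR_n((a,b))|=n^{a+b}-n^{\max(a,b)}$, and the count of uniquely decodable codes from Theorem~\ref{t3}, namely $|UD_n((a,b))|=n^{a+b}-n^{{\rm gcd}(a,b)}$. Together these yield the explicit expression
$$
\rho_{n,(a,b)}=\frac{n^{a+b}-n^{\max(a,b)}}{n^{a+b}-n^{{\rm gcd}(a,b)}}
$$
for every $a,b\geq 1$, so the task reduces to locating the infimum of this quantity over all pairs of positive integers.

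For the lower bound $\xi_{n,2}\geq 1-1/n$, I would show that \emph{every} individual quotient already satisfies $\rho_{n,(a,b)}\geq 1-1/n$. The denominator $n^{a+b}-n^{{\rm gcd}(a,b)}$ is positive, since $a+b>{\rm gcd}(a,b)$, so clearing it turns the desired inequality into the equivalent statement
$$
n^{a+b}+(n-1)n^{{\rm gcd}(a,b)}\geq n^{\max(a,b)+1}.
$$
As $\min(a,b)\geq 1$, we have $a+b\geq\max(a,b)+1$, whence $n^{a+b}\geq n^{\max(a,b)+1}$ on its own; the remaining term $(n-1)n^{{\rm gcd}(a,b)}$ is nonnegative and only helps. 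This establishes the lower bound.

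For the upper bound $\xi_{n,2}\leq 1-1/n$, the shortest route is to apply Theorem~\ref{t4} with $m=2$: since $\lceil\log_n 2\rceil=1$ for every $n\geq 2$, that theorem gives $\xi_{n,2}\leq 1-(m-1)/n^{\lceil\log_n 2\rceil}=1-1/n$. In keeping with the present section one can instead inspect the family $L=(a,1)$ as $a\to\infty$: here ${\rm gcd}(a,1)=1$ and $\max(a,1)=a$, so
$$
\rho_{n,(a,1)}=\frac{n^{a+1}-n^a}{n^{a+1}-n}=\frac{n^{a-1}(n-1)}{n^a-1}\longrightarrow 1-\frac1n\quad(a\to\infty),
$$
which already forces the infimum to be at most $1-1/n$. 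Combining the two bounds gives $\xi_{n,2}=1-1/n$.

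The steps are all elementary and I do not anticipate a serious obstacle. The one point deserving attention is that the lower bound does \emph{not} follow from the earlier estimates: Theorem~\ref{twt1} bounds $|UD_n(L)|/|PR_n(L)|$ from below, hence $\rho_{n,L}$ from above, and so cannot supply a lower bound for $\rho$. The lower bound must instead come from the direct cross-multiplication above, where the whole content is the trivial inequality $a+b\geq\max(a,b)+1$.
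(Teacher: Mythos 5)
Your proposal is correct and follows essentially the same route as the paper: both combine the exact formulas $|PR_n((a,b))|=n^{a+b}-n^{\max(a,b)}$ and $|UD_n((a,b))|=n^{a+b}-n^{\gcd(a,b)}$, prove the pointwise bound $\rho_{n,(a,b)}\geq 1-1/n$ by an elementary manipulation resting on $a+b\geq\max(a,b)+1$ (the paper bounds the denominator above by $n^{a+b}$ where you cross-multiply, but these are the same estimate), and obtain the matching upper bound from the family $(a,1)$ as $a\to\infty$. Your closing observation that Theorem~\ref{twt1} could not supply the lower bound is also accurate.
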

\begin{proof}
For any integers $a, b\geq 1$, we have by  (\ref{1})--(\ref{u1}):
$$
\rho_{n, (a, b)}=\frac{|PR_n((a, b))|}{|UD_n((a, b))|}=\frac{n^{a+b}-n^{\max(a, b)}}{n^{a+b}-n^{\gcd(a, b)}}.
$$
Since $0<n^{a+b}-n^{\gcd(a, b)}<n^{a+b}$, we can write
$$
\rho_{n, (a, b)}>\frac{n^{a+b}-n^{\max(a, b)}}{n^{a+b}}=1-\frac{1}{n^{\min(a, b)}}\geq 1-\frac{1}{n},
$$
which implies $\xi_{n,2}\geq 1-1/n$. On the other hand, for any $a\geq 1$, we have:
$$
\rho_{n, (a, 1)}=\frac{n^{a+1}-n^{a}}{n^{a+1}-n}=\left(1-\frac{1}{n}\right)\left(1-\frac{1}{n^{a+1}}\right)^{-1}.
$$
Thus, for sufficiently large $a$ the quotient $\rho_{n, (a, 1)}$ can be arbitrarily close to $1-1/n$. Consequently $\xi_{n,2}\leq 1-1/n$ and hence $\xi_{n,2}=1-1/n$.\qed
\end{proof}


\begin{thebibliography}{00}

\bibitem {0} J. Berstel, D. Perrin, {\it Theory of codes} Pure and Applied Mathematics, vol. 117. Academic Press Inc., Orlando, FL, 1985.
\bibitem{1} E. K. Blum, {\it A note on free semigroups with two generators}, Bull. Amer. Math. Soc., vol. 71, pp. 678-679, 1965.
\bibitem{14} C.-M. Fan, H.J. Shyr, S.S. Yu, {\it d-words and d-languages}, Acta Informatica 35:709–727, 1998.
\bibitem{11} Z.-Z. Li, Y.S. Tsai, {\it Three-element codes with one d-primitive word},  Acta Informatica, 41:171-180, 2004.
\bibitem{9} Z.-Z. Li, Y.S. Tsai, G.-C Yih, {\it  Characterizations on codes with three elements},  Soochow Journal of Mathematics 30(2): 177-196, 2004.
\bibitem{6} B. McMillan, {\it Two inequalities implied by unique decipherability},  IEEE Trans. Information Theory 2 (4): 115–116, (1956).
\bibitem{3} A. Sardinas, G. W. Patterson, {\it A necessary and sufficient condition for the unique decomposition of coded messages}, Convention Record of the I.R.E., 1953 National Convention, Part 8: Information Theory, pp. 104–108.
\bibitem{2} A. Woryna, {\it On the set of uniquely decodable codes with a given sequence of code word lengths}, Discrete Math. 340 (2017) 51–57.

\end{thebibliography}
\end{document}